\documentclass[10pt]{amsart}

\usepackage[margin=1in]{geometry}
\usepackage{amsmath,amssymb,amsthm,mathtools}
\usepackage{enumitem}
\usepackage{xcolor}
\usepackage{tikz}
\usepackage{mathtools}
\mathtoolsset{showonlyrefs}
\definecolor{nicegreen}{RGB}{34,120,85}
\usepackage[
    colorlinks=true,
    linkcolor=nicegreen,
    citecolor=nicegreen,
    urlcolor=nicegreen
]{hyperref}
\numberwithin{equation}{section}

\newtheorem{theorem}{Theorem}[section]
\newtheorem{proposition}[theorem]{Proposition}
\newtheorem{lemma}[theorem]{Lemma}

\newcommand{\PP}{\mathbb P}

\newcommand{\calA}{\mathcal A}
\newcommand{\calD}{\mathcal D}
\newcommand{\calG}{\mathcal G}
\newcommand{\calM}{\mathcal M}
\newcommand{\calN}{\mathcal N}
\newcommand{\eps}{\varepsilon}
\newcommand{\dd}{\,\mathrm d}
\DeclareMathOperator{\tr}{tr}

\begin{document}

\title[Scalar reduction for interacting urns]{A deterministic scalar reduction for the phase transition in two interacting urns}
\author[J. Xia]{Jiaming Xia}
\address[Jiaming Xia]{Shanghai Institute for Mathematics and Interdisciplinary Sciences, Shanghai, China}

\keywords{}
\subjclass[2010]{}
\date{\today}

\begin{abstract}
We study two interacting urns with power reinforcement \(W(n)=n^\alpha\), for \(\alpha>1\). Using Qin's stochastic-approximation theorem as input, we reduce the phase transition problem for domination to the sign analysis of a single analytic function \(\Phi_p\). This deterministic reduction identifies the sharp threshold and proves that the two natural critical parameters for domination coincide. We determine the first-order behavior of the critical interaction parameter as $\alpha\downarrow1$ and show that it is not nondecreasing in the reinforcement exponent, answering two questions posed by Qin.
\end{abstract}
\maketitle

\section{Background and motivation}

Reinforced urn processes are among the simplest probabilistic models where history considerably influences future dynamics. In the classical Pólya urn, drawing a color makes that color more likely to be drawn later. This feedback mechanism has been used as a tractable model for learning, competition, preferential attachment, market share, and collective decision making (for example, see \cite{Pemantle2007}). In a single strongly reinforced urn, the feedback is so strong that eventually only one color is drawn.

Interacting urns introduce a second effect. Each urn is reinforced, sometimes from a source of its own local composition and sometimes from the composition of the whole system. When the parameter \(p\) is small, different urns may preserve different local preferences. When \(p\) is large, the system tends to synchronize and one color may dominate all urns. This creates a competition between local persistence and global interaction, which causes the phase transition we consider here. Such an interacting urn process provides a mathematical prototype for analyzing various weakly coupled systems in physics, economics, social sciences, etc.

There is substantial literature on reinforced processes and their analysis through stochastic approximation. The general dynamical-systems approach goes back to the works \cite{Pemantle1990,Benaim1999,Pemantle2007,Borkar2024}. Several interacting variants have been studied, including synchronization for linearly reinforced interacting Pólya and Friedman urns \cite{DaiPraLouisMinelli2014,CrimaldiDaiPraMinelli2016,Sahasrabudhe2016}, nonlinear and collective reinforcement \cite{CostaJordan2022,CrimaldiLouisMinelli2023,Mirebrahimi2024}, and broad network formulations addressing synchronization and asymptotic polarization \cite{AlettiCrimaldiGhiglietti2024Polarization,AlettiCrimaldiGhiglietti2024Asymptotics}. Graph-based and network urn models have also developed rapidly \cite{BenaimBenjaminiChenLima2015,vanDerHofstadHolmesKuznetsovRuszel2016,KaurSahasrabudhe2023,DahiyaSahasrabudhe2024,AlvesBarrosLima2025,KaurSahasrabudhe2025,PiresRosales2025,Melchionna2025}. The strongly reinforced interacting urns studied here were introduced in \cite{Launay2011}. Their generalized limiting case of exponential reinforcement was studied in \cite{LaunayLimic2012}. A more recent work \cite{Qin2024} developed the stochastic-approximation framework for power reinforcement, disproved the conjecture in \cite{Launay2011} that every positive interaction forces monopoly, and left open the sharp equality of the two critical parameters.

The primary purpose of this paper is to resolve the question whether the two critical parameters coincide. We use the
stochastic-approximation theorem in \cite{Qin2024} as a black-box input and reduce the problem to a deterministic
analysis of the limiting vector field. The central observation is that all off-diagonal equilibria can be encoded by a scalar analytic function \(\Phi_p\). Negative values of \(\Phi_p\) force the existence of a stable nondominating equilibrium, while a monotonicity identity in \(p\) shows that the set of parameters for which this happens is downward closed. This gives an explicit deterministic candidate for the critical point and proves that it is the unique phase-transition threshold.

Beyond identifying the sharp threshold, we ask how it depends on the reinforcement exponent. Increasing the exponent in power reinforcement makes the draw probability more responsive to deviations from an even color split. Therefore, one might expect that the interaction probability increases to overcome the opposing local preferences and that the critical threshold is nondecreasing in the exponent. We show that this intuition is false by giving a counterexample. This answers a question posed in \cite[Section~7]{Qin2024}.

\section{Setting and main result}

We consider two urns, each containing black and red balls. Let \(B_n(i)\) and \(R_n(i)\) denote the numbers of black and red balls in urn \(i\in\{1,2\}\) at time \(n\). Write
\begin{equation*}
 B_n^*=B_n(1)+B_n(2),
 \qquad
 R_n^*=R_n(1)+R_n(2),
\end{equation*}
and let
\begin{equation*}
 \calG_n=
 \sigma\{B_m(i),R_m(i):0\le m\le n,\ i=1,2\}.
\end{equation*}
We assume throughout that both colors are present in the system initially and, following \cite{Qin2024}, relabel the urns and colors if necessary so that
\begin{equation*}
 R_0(1)\ge 1,
 \qquad
 B_0(2)\ge 1.
\end{equation*}
Let \(\{W(n)\}_{n\ge0}\) be a reinforcement sequence with \(W(n)>0\) for \(n\ge1\) and \(W(0)\ge0\). Given \(p\in[0,1]\), the update at time \(n+1\) is defined as follows. Conditional on \(\calG_n\), the random variables \(\xi_{n+1}(1)\) and \(\xi_{n+1}(2)\) are independent Bernoulli random variables with
\begin{equation}\label{eq:urn-update-probability}
\PP\bigl(\xi_{n+1}(i)=1\mid \calG_n\bigr)
=
 p\frac{W(B_n^*)}{W(B_n^*)+W(R_n^*)}
 +(1-p)\frac{W(B_n(i))}{W(B_n(i))+W(R_n(i))}.
\end{equation}
Then
\begin{equation}\label{eq:urn-update}
 B_{n+1}(i)=B_n(i)+\xi_{n+1}(i),
 \qquad
 R_{n+1}(i)=R_n(i)+1-\xi_{n+1}(i).
\end{equation}
Thus, with probability \(p\), urn \(i\) draws according to the combined system, while with probability \(1-p\), it draws according to its own local composition.

\begin{figure}[t]
\centering
\begin{tikzpicture}[x=1cm,y=1cm,>=latex]

\tikzset{
  urnline/.style={line width=1pt, draw=brown!70!black},
  sepbar/.style={line width=1pt, dashed, draw=gray!70},
  blackball/.style={
    circle,
    draw=black,
    fill=white,
    line width=1pt,
    minimum size=6.1mm,
    inner sep=0pt
  },
  redball/.style={
    circle,
    draw=red!80!black,
    fill=white,
    line width=1pt,
    minimum size=6.1mm,
    inner sep=0pt
  }
}

\newcommand{\openurn}[3]{%
  \draw[urnline] (#1,#2+1.55) -- (#1,#2) -- (#1+#3,#2) -- (#1+#3,#2+1.55);
}

\begin{scope}[shift={(0,0)}]

  \node[font=\small] at (1.45,2.15) {Urn 1};
  \node[font=\small] at (4.55,2.15) {Urn 2};

  \openurn{0.35}{0}{2.2}
  \openurn{3.45}{0}{2.2}

  \node[blackball] at (1.05,0.90) {\scriptsize B};
  \node[blackball] at (1.38,0.35) {\scriptsize B};
  \node[blackball] at (2.20,0.98) {\scriptsize B};
  \node[redball]   at (0.75,0.35) {\scriptsize R};
  \node[blackball] at (2.15,0.35) {\scriptsize B};

  \node[redball]   at (4.20,0.86) {\scriptsize R};
  \node[redball]   at (4.60,0.35) {\scriptsize R};
  \node[redball]   at (5.3,1.00) {\scriptsize R};
  \node[blackball] at (3.80,0.35) {\scriptsize B};
  \node[blackball] at (5.30,0.35) {\scriptsize B};

  \draw[->, thin] (1.45,1.85) -- (1.45,1.28);
  \draw[->, thin] (4.55,1.85) -- (4.55,1.28);

  \node[align=center, font=\small] at (3.0,-0.55)
    {(a) Local step: with probability $1-p$,\\
     each urn draws from its own};
\end{scope}

\begin{scope}[shift={(7.6,0)}]

  \node[font=\small] at (2.45,2.15) {2 urns combined};

  \openurn{0.25}{0}{4.4}

  \draw[sepbar] (2.48,0.05) -- (2.48,1.5);

  \node[blackball] at (1.05,0.90) {\scriptsize B};
  \node[blackball] at (1.38,0.35) {\scriptsize B};
  \node[blackball] at (2.15,0.98) {\scriptsize B};
  \node[redball]   at (0.75,0.35) {\scriptsize R};
  \node[blackball] at (2.15,0.35) {\scriptsize B};

  \node[redball]   at (3.20,0.86) {\scriptsize R};
  \node[redball]   at (3.60,0.35) {\scriptsize R};
  \node[redball]   at (4.3,1.00) {\scriptsize R};
  \node[blackball] at (2.80,0.35) {\scriptsize B};
  \node[blackball] at (4.30,0.35) {\scriptsize B};

  \draw[->, thin] (2.45,1.9) -- (2.45,1.28);

  \node[align=center, font=\small] at (2.45,-0.55)
    {(b) Interaction step: with probability $p$,\\
     the draw is from the combined pool};
\end{scope}
\end{tikzpicture}
\caption{An illustration of the interacting urn mechanism for two urns.}
\label{fig:interacting-urn-mechanism}
\end{figure}

The proportions of black balls in the two urns are
\begin{equation}\label{eq:proportions}
 x_n=\frac{B_n(1)}{n+B_0(1)+R_0(1)},
 \qquad
 y_n=\frac{B_n(2)}{n+B_0(2)+R_0(2)}.
\end{equation}
We focus on power reinforcement
\begin{equation*}
 W(n)=n^\alpha,
 \qquad \text{for }\alpha>1,
\end{equation*}
and, when \(\alpha\) is an integer, on polynomial reinforcement of degree \(\alpha\). In both cases the associated stochastic approximation has the same limiting vector field.

The domination event is
\begin{equation}\label{eq:domination-event}
\calD=
\left\{\lim_{n\to\infty}(x_n,y_n)=(0,0)\right\}
\cup
\left\{\lim_{n\to\infty}(x_n,y_n)=(1,1)\right\}.
\end{equation}
Related to the domination event is the monopoly event, defined by
\begin{equation*}
\calM=
\{R_{n+1}^*=R_n^*\text{ eventually}\}
\cup
\{B_{n+1}^*=B_n^*\text{ eventually}\},
\end{equation*}
where eventually only one color is added to the whole system. For power and polynomial reinforcement with \(\alpha>1\), domination and monopoly coincide almost surely in the sense that \(\PP_p^{(\alpha)}(\calD\setminus\calM)=0\) (see \cite{Qin2024}).

For \(\alpha>1\), define
\begin{equation}\label{eq:critical-parameters}
 p_\alpha
 :=
 \inf\left\{q\in[0,1]:\PP_p^{(\alpha)}(\calD)=1
 \text{ for every }p\ge q\right\},
\end{equation}
and
\begin{equation}\label{eq:tilde-critical-parameter}
 \widetilde p_\alpha
 :=
 \sup\left\{q\in[0,1]:\PP_p^{(\alpha)}(\calD)<1
 \text{ for every }p\le q\right\}.
\end{equation}
In \cite{Qin2024}, it is proved that \(p_\alpha>0\), thus disproving the conjecture in \cite{Launay2011} that monopoly occurs for every \(p>0\) under power reinforcement. We prove the conjecture that \(\widetilde p_\alpha=p_\alpha\), proposed in \cite{Qin2024}, by identifying both parameters with a deterministic scalar critical point.

Fix \(\alpha>1\) and set
\begin{equation}\label{def:h}
 h(t)=h_\alpha(t):=\frac{t^\alpha}{t^\alpha+(1-t)^\alpha},
 \qquad t\in[0,1].
\end{equation}

For power reinforcement \(W(n)=n^\alpha\), the stochastic-approximation recursion for
\((x_n,y_n)\) has limiting drift \(F_p^{(\alpha)}\), where
\begin{equation}\label{eq:F}
\begin{cases}
F_{p,1}^{(\alpha)}(x,y)
=-x+(1-p)h(x)+p h\!\left(\dfrac{x+y}{2}\right),\\[2mm]
F_{p,2}^{(\alpha)}(x,y)
=-y+(1-p)h(y)+p h\!\left(\dfrac{x+y}{2}\right).
\end{cases}
\end{equation}
Thus, the associated deterministic system is
\begin{equation}\label{eq:ode}
 \dot z=F_p^{(\alpha)}(z), \qquad\text{for } z=(x,y)\in[0,1]^2.
\end{equation}

The same limiting vector field is obtained for polynomial reinforcements with leading term \(n^\alpha\). The lower-order terms contribute only an \(O(n^{-1})\) correction to the
conditional drift, which is included in the deterministic remainder. Hence, the limiting ODE is still \eqref{eq:ode}.
We shall use the following facts from \cite{Qin2024}.

\begin{proposition}[Stochastic approximation]\label{prop:Qin-input}
Let \(\alpha>1\) and \(p\in[0,1]\). Let \(\Lambda_p^{(\alpha)}\) be the equilibrium set of
\[
 \dot z=F_p^{(\alpha)}(z)
\]
on \([0,1]^2\). The vector field is a gradient field: $F_p^{(\alpha)}=\nabla L_p^{(\alpha)}$,
where
\begin{equation}\label{def:G}
 L_p^{(\alpha)}(x,y)
 =(1-p)\{G(x)+G(y)\}+2pG\!\left(\frac{x+y}{2}\right)
 -\frac{x^2+y^2}{2},
 \qquad
 G(t)=\int_0^t h(u)\dd u .
\end{equation}
Stability is understood relative to the invariant square \([0,1]^2\). An equilibrium
\(z_*\in\Lambda_p^{(\alpha)}\) is called stable if it is a strict local maximum of
\(L_p^{(\alpha)}\) relative to \([0,1]^2\). We call \(z_*\) unstable if the top
eigenvalue \(\lambda_+(z_*)\) of \(DF_p^{(\alpha)}(z_*)\) is positive.

Moreover:
\begin{enumerate}[label=\textup{(\roman*)}, leftmargin=*]
    \item for \(p>0\), the equilibrium set \(\Lambda_p^{(\alpha)}\) is finite;

    \item every equilibrium other than
    \[
      (0,0),\qquad \left(\frac12,\frac12\right),\qquad (1,1)
    \]
    lies in one of the two off-diagonal rectangles
    \[
      \left(0,\frac12\right)\times\left(\frac12,1\right),
      \qquad
      \left(\frac12,1\right)\times\left(0,\frac12\right);
    \]

    \item the urn process \((x_n,y_n)\) converges almost surely to an equilibrium of the ODE;

    \item if \(z_*\in\Lambda_p^{(\alpha)}\) is stable, then
    \[
      \PP_p^{(\alpha)}
      \left(\lim_{n\to\infty}(x_n,y_n)=z_*\right)>0;
    \]

    \item if \(z_*\in\Lambda_p^{(\alpha)}\) is unstable, then
    \[
      \PP_p^{(\alpha)}
      \left(\lim_{n\to\infty}(x_n,y_n)=z_*\right)=0 .
    \]
\end{enumerate}
\end{proposition}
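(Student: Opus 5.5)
The proposition is imported from \cite{Qin2024}, so the plan is to check that each item follows from general principles applied to the explicit field \eqref{eq:F}. Items (iii)--(v) come from stochastic approximation; items (i)--(ii) and the gradient structure are deterministic properties of \(h\).

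\textbf{Gradient structure.} Differentiating \eqref{def:G} gives
\[
\partial_x L_p^{(\alpha)}=(1-p)h(x)+p\,h\!\left(\tfrac{x+y}{2}\right)-x=F_{p,1}^{(\alpha)}(x,y),
\]
and symmetrically \(\partial_y L_p^{(\alpha)}=F_{p,2}^{(\alpha)}\). Along trajectories \(L_p^{(\alpha)}\) is then a strict Lyapunov function, since \(\tfrac{d}{dt}L_p^{(\alpha)}=|F_p^{(\alpha)}|^2\). The square \([0,1]^2\) is invariant because \(F_1\ge0\) on \(\{x=0\}\) and \(F_1\le0\) on \(\{x=1\}\), as \(h\) takes values in \([0,1]\).

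\textbf{Equilibria, items (i) and (ii).}
\begin{itemize}[leftmargin=*]
\item Subtracting the two equilibrium equations gives \(x-(1-p)h(x)=y-(1-p)h(y)\).
\item Adding them and using \(h(1-t)=1-h(t)\) shows the system is symmetric under \((x,y)\mapsto(1-x,1-y)\).
\item Diagonal equilibria solve \(h(t)=t\). For \(\alpha>1\) the only solutions are \(0,\tfrac12,1\): \(h(t)-t\) has the sign of \(t-\tfrac12\) on \((0,1)\), because \(h\) is S-shaped with \(h'(\tfrac12)=\alpha>1\).
\item Off-diagonal equilibria with \(x<y\) must satisfy \(x<\tfrac12<y\). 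If both lay on one side of \(\tfrac12\), say in \((0,\tfrac12]\), then \(h(x)\le x\) and \(h(y)\le y\) with equality only at the endpoints. Combined with \(h(\tfrac{x+y}{2})<\tfrac{x+y}{2}\), summing the two equations gives \(0=\sum(-z+(1-p)h(z))+2p\,h(\tfrac{x+y}{2})<0\), a contradiction unless \((x,y)=(0,0)\).
\item Points on the edges of the square are excluded the same way.
\item Finiteness for \(p>0\) follows from analyticity of \(F\) on the open rectangles, together with the fact that the equilibria are isolated. The variety \(\{F=0\}\) cannot contain a curve, since along a curve of equilibria \(L\) would be constant and the curve would have to extend to the boundary, which is excluded by the analysis above. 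This yields finitely many zeros in each compact piece. Near the corners of the rectangles, the local behaviour \(h(t)\sim t^\alpha\) rules out accumulation.
\end{itemize}

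\textbf{Probabilistic items (iii)--(v).} First write \((x_n,y_n)\) as a Robbins--Monro scheme \(z_{n+1}-z_n=\gamma_n(F_p^{(\alpha)}(z_n)+U_{n+1}+r_n)\), with \(\gamma_n\asymp 1/n\), \(U_{n+1}\) a bounded martingale difference, and \(r_n=O(1/n)\). The remainder \(r_n\) comes from replacing \(W(B^*)/(W(B^*)+W(R^*))\) by \(h\) of the proportions; for the combined pool this is exact up to the differing normalisations.

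\begin{itemize}[leftmargin=*]
\item \emph{Item (iii).} By Bena\"im's theory the limit set is internally chain transitive. For a gradient field with finitely many equilibria, such a set consists of a single equilibrium, since \(L\) takes finitely many values on \(\Lambda\).
\item \emph{Item (iv).} Use the attainability argument: any neighbourhood of a stable equilibrium is reached with positive probability, because the noise is non-degenerate in the interior and the step sizes are small at late times. A strict local maximum of \(L\) is an attractor, so Bena\"im's local convergence theorem gives positive probability of convergence. For boundary points such as \((0,0)\), one instead compares with a one-colour urn.
\item \emph{Item (v).} Apply Pemantle's nonconvergence theorem. This requires the noise to be uniformly non-degenerate in the unstable eigendirection near \(z_*\).
\end{itemize}

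\textbf{Main obstacle.} The hard step is item (v). For interior equilibria, the conditional variance of \(\xi(i)\) is bounded below, which gives the needed non-degeneracy. For the unstable point \((\tfrac12,\tfrac12)\) this also works. Boundary points are where non-degeneracy fails, and I would handle them separately, which is exactly why stability is defined relative to the square.
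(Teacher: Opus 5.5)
The paper does not prove this proposition; it imports it from \cite{Qin2024} as a black box, so your reconstruction has to stand on its own. Several parts of it are sound for $p>0$: the gradient identity, the invariance of the square, the exclusion of equilibria with both coordinates on one side of $\tfrac12$, and the Bena\"im/Pemantle treatment of (iii)--(v). For $p>0$ the only boundary equilibria are $(0,0)$ and $(1,1)$, where $DF_p^{(\alpha)}=-I$, so every unstable equilibrium is interior and the noise there is nondegenerate.

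The genuine gap is item (i). Appealing to ``the fact that the equilibria are isolated'' assumes the conclusion, and the justification you offer does not work. A one-dimensional component of the analytic set $\{F_p^{(\alpha)}=0\}$ could be a closed curve inside the open rectangle. $L_p^{(\alpha)}$ would be constant on it with no contradiction (the critical set of $-(x^2+y^2-1)^2$ contains a circle), and nothing forces such a curve to reach the boundary. The corner remark is also aimed at the wrong place. For $p>0$ the only equilibrium on the boundary of $[0,\tfrac12]\times[\tfrac12,1]$ is $(\tfrac12,\tfrac12)$, where $h$ is analytic. Excluding accumulation there needs $DF_p^{(\alpha)}(\tfrac12,\tfrac12)$ to be nonsingular, i.e.\ $(1-p)\alpha\ne1$, which fails exactly at $p=(\alpha-1)/\alpha$. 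The gap propagates: your proof of (iii) uses finiteness to shrink a connected internally chain transitive set to a point, and your (iv) uses isolation to call a strict local maximum an attractor.

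What is missing is a reduction to one real variable that exploits the structure of $F_p^{(\alpha)}$:
\begin{itemize}
\item For $p\ge(\alpha-1)/\alpha$, subtracting the equations gives $y-x=(1-p)(h(y)-h(x))<(1-p)\alpha(y-x)\le y-x$ when $x<y$, so there are no off-diagonal equilibria.
\item For $0<p<(\alpha-1)/\alpha$, fixing $z=(x+y)/2$ determines the upper coordinate uniquely as $q_p(z)$ (Lemma~\ref{lem:upper-branch}). Hence the equilibria in $\Omega_+$ are exactly the zeros in $(\tfrac12,\tfrac34)$ of $\Phi_p$ (Lemma~\ref{lem:encoding}).
\item $\Phi_p$ is analytic on a neighbourhood of $[\tfrac12,\tfrac34]$ and $\Phi_p(\tfrac34)>0$ (Lemma~\ref{lem:endpoint-sign}), so it has finitely many zeros there.
\item The anti-diagonal contributes at most two off-center points (Lemma~\ref{lem:ad-stability}), and the other sectors follow from the symmetries $(x,y)\mapsto(y,x)$ and $(x,y)\mapsto(1-y,1-x)$.
\end{itemize}

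Separately, items (ii) and (iv) as stated fail at $p=0$, and your arguments silently use $p>0$. At $p=0$ the equilibrium set is $\{0,\tfrac12,1\}^2$, which contains $(0,\tfrac12)$ and $(0,1)$; there your summation only gives $\le 0$, not $<0$. Also, if $B_0(1)=0$ then $W(0)=0$ forces $x_n\equiv 0$, so the stable equilibrium $(1,1)$ is reached with probability zero and attainability fails. The paper only invokes these items for $p>0$ and handles $p=0$ directly.
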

\begin{figure}[t]
\centering
\begin{tikzpicture}[scale=5.4]
  \def\m{0.5}

  \fill[gray!14] (0,\m) rectangle (\m,1);
  \fill[gray!14] (\m,0) rectangle (1,\m);
  \fill[gray!35] (0,1) -- (\m,1) -- (\m,\m) -- cycle;
  \draw[thick] (0,0) rectangle (1,1);

  \draw[dashed] (\m,0) -- (\m,1);
  \draw[dashed] (0,\m) -- (1,\m);

  \draw[gray!60] (0,0) -- (1,1);

  \fill (0,0) circle (0.012);
  \fill (\m,\m) circle (0.012);
  \fill (1,1) circle (0.012);

  \node[below left] at (0,0) {$(0,0)$};
  \node[below right] at (\m,\m) {$\left(\frac12,\frac12\right)$};
  \node[above right] at (1,1) {$(1,1)$};
  \node at (0.75,0.25) {$\mathcal R_{-}$};
  \node at (0.25,0.75) {$\mathcal R_{+}$};
  \node at (0.4,0.8) {$\Omega_+$};

  \draw[->] (0,-0.015) -- (0,1.06);
  \draw[->] (-0.015,0) -- (1.06,0);

  \node[below] at (\m,0) {$\frac12$};

  \node[left] at (0,\m) {$\frac12$};

  \node[right] at (1.07,0) {$x$};
  \node[above] at (0,1.07) {$y$};
\end{tikzpicture}
\caption{The lightly shaded region is
\(\mathcal R_{+}\cup \mathcal R_{-}\).
By Proposition~\ref{prop:Qin-input}, every equilibrium other than
\((0,0)\), \(\left(\frac12,\frac12\right)\), and \((1,1)\)
must lie in the shaded region. The darker triangular region is
\[\Omega_+=\left\{(x,y):0<x<\frac12<y<1,\ x+y>1\right\},\] which is defined in \eqref{def:Dp} and used later in the scalar reduction.}
\label{fig:off-diagonal-rectangles}
\end{figure}
The deterministic critical point \(p_{\mathrm{det}}(\alpha)\) is defined in \eqref{eq:pdet-alpha} below after the scalar function \(\Phi_p\) has been constructed. Our main result is the following.

\begin{theorem}\label{thm:main}
For every \(\alpha>1\),
\begin{equation*}
 \widetilde p_\alpha=p_\alpha=p_{\mathrm{det}}(\alpha).
\end{equation*}
For polynomial reinforcements of integer degree \(m\ge 2\), the same conclusion holds with \(\alpha=m\), whenever the stochastic-approximation conclusions in Proposition~2.1 are available.
\end{theorem}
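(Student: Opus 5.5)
The proof reduces the probabilistic statement to a deterministic one about the vector field \eqref{eq:F}. By Proposition~\ref{prop:Qin-input}(iii), \((x_n,y_n)\) converges almost surely to an equilibrium, and by (ii) every equilibrium other than \((0,0),(\tfrac12,\tfrac12),(1,1)\) lies in \(\mathcal R_+\cup\mathcal R_-\). The diagonal point \((\tfrac12,\tfrac12)\) is unstable for \(\alpha>1\). Indeed, \(h'(1/2)=\alpha>1\), so \(DF_p\) has the eigenvalue \(-1+\alpha>0\) along the direction \((1,1)\). Hence (v) gives it probability zero. Therefore \(\PP_p^{(\alpha)}(\calD)<1\) if and only if some off-diagonal equilibrium is reached with positive probability. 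By (iv), this happens whenever a stable off-diagonal equilibrium exists. Conversely, if every off-diagonal equilibrium is unstable (top eigenvalue positive), then by (v) none is reached, and \(\PP_p(\calD)=1\). The symmetry \((x,y)\mapsto(1-y,1-x)\) composed with swapping coordinates preserves \(F_p\), so it suffices to work in \(\mathcal R_+\), and by the further symmetry \(t\mapsto 1-t\) in the triangle \(\Omega_+\) together with its boundary \(x+y=1\).

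Next I construct \(\Phi_p\). Subtracting the two equations of \eqref{eq:F}, every equilibrium satisfies \(x-(1-p)h(x)=y-(1-p)h(y)\). Write \(s=(x+y)/2\). The first equation then gives \(p=\bigl(x-h(x)\bigr)/\bigl(h(s)-h(x)\bigr)\). I would parametrize the off-diagonal equilibria by a single variable (say \(x\in(0,\tfrac12)\), with \(y=y_p(x)\) determined through the monotone branches of \(t\mapsto t-(1-p)h(t)\)). I would then define \(\Phi_p\) as the resulting scalar analytic equation, normalized so that its sign detects stability. The key computation is that at an equilibrium, \(DF_p\) is symmetric, since \(F_p=\nabla L_p\). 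Its determinant equals a positive multiple of \(\Phi_p'\) (or of \(\Phi_p\) itself, depending on the normalization). Moreover, on \(\Omega_+\) the diagonal entries \(-1+(1-p)h'+ph'(s)/2\) are negative at the relevant branch. Thus stability, meaning that the Hessian of \(L_p\) is negative definite, is equivalent to the sign condition. The plan is to show two things: a sign change of \(\Phi_p\) from the boundary behavior forces a zero with the stable sign, and if \(\Phi_p>0\) everywhere then every zero is unstable. I would then set \(p_{\mathrm{det}}(\alpha)=\sup\{p:\inf\Phi_p<0\}\), matching \eqref{eq:pdet-alpha}.

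The third ingredient is monotonicity in \(p\). I would compute \(\partial_p\Phi_p\) and show that it is strictly positive on the relevant domain, presumably via an identity expressing it as \((h(s)-h(x))\) times a positive factor. This makes \(\{p:\inf\Phi_p<0\}\) downward closed. Combined with the previous step, this gives \(\PP_p(\calD)<1\) for \(p<p_{\mathrm{det}}\) and \(\PP_p(\calD)=1\) for \(p>p_{\mathrm{det}}\). It follows that \(\widetilde p_\alpha\ge p_{\mathrm{det}}\ge p_\alpha\). Since \(\widetilde p_\alpha\le p_\alpha\) trivially from the definitions, equality holds; note that the value at \(p=p_{\mathrm{det}}\) itself does not affect the inf and sup. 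The polynomial case follows verbatim, because only Proposition~\ref{prop:Qin-input} and the vector field \eqref{eq:F} were used.

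The main obstacle will be the second step: the exact correspondence between the sign of \(\Phi_p\) and the stability of off-diagonal equilibria. This includes degenerate cases with \(\det DF_p=0\), which are neither stable nor unstable in the sense of Proposition~\ref{prop:Qin-input}. I would handle these by analyticity. For \(p\ne p_{\mathrm{det}}\), the strict monotonicity in \(p\) shows that a double zero can occur only at the critical value. Away from it, every zero is nondegenerate, so Proposition~\ref{prop:Qin-input}(iv)–(v) applies cleanly.
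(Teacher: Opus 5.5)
Your architecture matches the paper's: a scalar function for the off-diagonal equilibria, a symmetric Jacobian whose determinant is proportional to $\Phi_p'$, monotonicity in $p$, and $p_{\mathrm{det}}$ as the supremum of the set where $\Phi_p$ goes negative. The genuine gap is the step you yourself flag as the main obstacle.

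\textbf{Monotonicity does not confine double zeros to $p_{\mathrm{det}}$.} In the paper, $\partial_p\Phi_p(z)>0$ at every $z\in(1/2,3/4)$ with $x_p(z)<1/2$, and this condition holds at every zero. That only shows the zero set is locally a graph $p=P(z)$, with $\Phi_p(z)<0$ exactly when $p<P(z)$. Differentiating $\Phi_{P(z)}(z)=0$ shows that degenerate zeros at level $p$ are exactly critical points of $P$ at level $p$. Nothing in your argument rules out a critical point of $P$ strictly below $\sup P=p_{\mathrm{det}}$. An inflection-type one is enough: at its level, the only downward crossing of $\Phi_p$ may be an odd-order degenerate zero.

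The anti-diagonal makes this worse. The point $z=1/2$ is a zero of $\Phi_p$ for \emph{every} $p$, so $p$-monotonicity says nothing there. By Lemmas~\ref{lem:Phi-half} and~\ref{lem:ad-stability}, it is a double zero exactly at $p=p_{\mathrm{ad},*}$, and one only knows $p_{\mathrm{ad},*}\le p_{\mathrm{det}}$. At such parameters the candidate equilibrium has $\lambda_+=0$. It is then neither ``stable'' nor ``unstable'' in the sense of Proposition~\ref{prop:Qin-input} until you show it is a strict local maximum. Since $\widetilde p_\alpha\ge p_{\mathrm{det}}$ needs $\PP_p^{(\alpha)}(\calD)<1$ for \emph{every} $p<p_{\mathrm{det}}$, one exceptional value breaks the proof.

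\textbf{The missing idea: nondegeneracy is unnecessary.} ``Stable'' in Proposition~\ref{prop:Qin-input} means a strict local maximum of $L_p^{(\alpha)}$. Lemma~\ref{lem:downward-crossing-stable} shows that any downward crossing of $\Phi_p$, degenerate or not, is such a maximum.
\begin{enumerate}[label=\textup{(\alph*)}, leftmargin=*]
\item One first needs $\partial_yF_{p,2}^{(\alpha)}<0$ at the equilibrium. This is proved by a concavity argument in Proposition~\ref{prop:no-source}. You only assert that the diagonal entries are negative; only the $yy$ entry is controlled, and that is all that is needed.
\item One then solves $F_{p,2}^{(\alpha)}(x,\eta(x))=0$ and forms the reduced potential $\ell(x)=L_p^{(\alpha)}(x,\eta(x))$. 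It satisfies $\ell'(x_p(z))=\Phi_p(z)$ with $x_p'>0$, so a sign change of $\Phi_p$ from $+$ to $-$ is a strict maximum of $\ell$.
\item At $z_0=1/2$, the left side is handled by the reflection $L_p^{(\alpha)}(x,y)=L_p^{(\alpha)}(1-y,1-x)$.
\end{enumerate}
For $p<p_{\mathrm{det}}$, one simply takes the first sign change of $\Phi_p$ on $[1/2,z_1]$.

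\textbf{The case $p>p_{\mathrm{det}}$.} The paper does not argue that zeros are nondegenerate and unstable. It shows $\Phi_p$ has \emph{no} zeros in $(1/2,3/4)$: any zero, degenerate or not, would make $\Phi_{p'}$ negative for $p'<p$ nearby. The anti-diagonal equilibria are treated separately via Lemma~\ref{lem:ad-stability}. This is needed because $\Phi_p\ge0$ to the right of $1/2$ by itself yields only $\lambda_+\ge0$.

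\textbf{The form of $\partial_p\Phi_p$.} Your presumed one-factor identity is not what happens. Positivity rests on parametrizing by the midpoint $z$, so that $x_p(z)$ and $q_p(z)$ are symmetric about $z$. The decay of $h'$ in $|t-1/2|$ then gives $h(z)-h(x_p)>h(q_p)-h(z)$ and $h'(x_p)>h'(q_p)$. With $x$ held fixed, as in your parametrization, the sign of $\partial_p$ is not evident.
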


We also answer two questions posed in \cite[Section~7]{Qin2024} concerning the dependence of $p_\alpha$ on the reinforcement exponent. Proposition~\ref{prop:p-alpha-weak-limit} below determines the weak-reinforcement limit
\begin{equation*}
 \lim_{\alpha\downarrow1}\frac{p_\alpha}{\alpha-1}
 =\frac{1-s_0^2}{2s_0^2},
 \qquad
 2s_0\operatorname{arctanh}(s_0)=1.
\end{equation*}
Proposition~\ref{prop:p-alpha-not-monotone} shows that
$\alpha\mapsto p_\alpha$ is not nondecreasing by establishing the
explicit separation
\begin{equation*}
 p_{5/2}>\frac{31}{100}>p_3.
\end{equation*}

The rest of the paper proves Theorem~\ref{thm:main} and Propositions~\ref{prop:p-alpha-weak-limit} and \ref{prop:p-alpha-not-monotone}. The arguments are entirely deterministic once Proposition~\ref{prop:Qin-input} is granted.

\section{The anti-diagonal branch}

We begin with the equilibria on the anti-diagonal \(x+y=1\). On the anti-diagonal, the equilibrium equations reduce to a single scalar equation, and the stability of the resulting branch can be computed explicitly. The following lemma records this computation and introduces the first threshold needed in the proof.

\begin{lemma}[Anti-diagonal equilibria and stability]\label{lem:ad-stability}
Let \(\alpha>1\). The point \(\left(\frac12,\frac12\right)\) is an equilibrium for every
\(p\in[0,1]\). 
If $0\le p<\frac{\alpha-1}{\alpha}$,
then there is a unique \(u_{\alpha,p}\in[0,1/2)\) satisfying
\begin{equation}\label{eq:u-alpha-p}
 u=(1-p)h(u)+\frac p2 .
\end{equation}
The corresponding off-center anti-diagonal
equilibria are exactly
\[
 (u_{\alpha,p},1-u_{\alpha,p})\text{ and }
 (1-u_{\alpha,p},u_{\alpha,p}).
\]
If $ p\ge \frac{\alpha-1}{\alpha}$,
there is no off-center equilibrium on the anti-diagonal.

Furthermore, there exists a unique number
\[
 p_{\mathrm{ad},*}=p_{\mathrm{ad},*}(\alpha)
 \in\left(0,\frac{\alpha-1}{\alpha}\right)
\]
such that, for \(0\le p<(\alpha-1)/\alpha\),
\[
 \lambda_+(u_{\alpha,p},1-u_{\alpha,p})
 \begin{cases}
 <0, & \text{if }0\le p<p_{\mathrm{ad},*},\\
 =0, & \text{if }p=p_{\mathrm{ad},*},\\
 >0, & \text{if }p_{\mathrm{ad},*}<p<\dfrac{\alpha-1}{\alpha}.
 \end{cases}
\]
Consequently, the off-center anti-diagonal equilibria are asymptotically stable for
\(0\le p<p_{\mathrm{ad},*}\) and unstable for
\(p_{\mathrm{ad},*}<p<(\alpha-1)/\alpha\).
\end{lemma}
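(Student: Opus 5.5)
The plan is to restrict the vector field \eqref{eq:F} to the anti-diagonal, where it collapses to a single scalar equation. Every statement then comes from two facts about \(h\): the symmetry \(h(1-t)=1-h(t)\) and strict convexity of \(h\) on \([0,\frac12]\).

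\textbf{Step 1: reduction to a scalar equation.} Since \(h(1/2)=1/2\), the point \((\frac12,\frac12)\) is an equilibrium for every \(p\). On \(y=1-x\) we have \(h(\frac{x+y}{2})=h(\frac12)=\frac12\). Hence \(F_{p,1}^{(\alpha)}=0\) reads \(x=(1-p)h(x)+\frac p2\). By the symmetry \(h(1-t)=1-h(t)\), the equation \(F_{p,2}^{(\alpha)}=0\) is the same equation. So the off-center anti-diagonal equilibria are exactly \((u,1-u)\) and \((1-u,u)\) with \(u\in[0,\frac12)\) a root of
\[
g_p(u):=(1-p)h(u)+\tfrac p2-u .
\]

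\textbf{Step 2: convexity of \(h\) on \([0,\frac12]\).} We have
\[
h'(t)=\frac{\alpha t^{\alpha-1}(1-t)^{\alpha-1}}{(t^\alpha+(1-t)^\alpha)^2},
\]
so \(h'(0)=0\) and \(h'(\frac12)=\alpha\). Its logarithmic derivative is
\[
(\alpha-1)\Bigl(\frac1t-\frac1{1-t}\Bigr)+2\alpha\,\frac{(1-t)^{\alpha-1}-t^{\alpha-1}}{t^\alpha+(1-t)^\alpha}.
\]
Both terms are positive for \(0<t<\frac12\). Thus \(h'\) is strictly increasing on \((0,\frac12)\), and \(h'<\alpha\) there.

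\textbf{Step 3: existence and uniqueness of \(u_{\alpha,p}\).} By Step 2, \(g_p\) is strictly convex on \([0,\frac12]\) when \(p<1\), with \(g_p(\frac12)=0\) and \(g_p'(\frac12)=(1-p)\alpha-1\).

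If \(p\ge(\alpha-1)/\alpha\), then \(g_p'(\frac12)\le0\). Strict convexity gives \(g_p(u)>g_p(\frac12)+g_p'(\frac12)(u-\frac12)\ge0\) for \(u<\frac12\), so there is no off-center root. The case \(p=1\) is trivial, since then \(g_1(u)=\frac12-u\).

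If \(p<(\alpha-1)/\alpha\), then \(g_p<0\) just left of \(\frac12\), while \(g_p(0)=\frac p2\ge0\). For \(p=0\) the root is \(u=0\). The intermediate value theorem gives a root in \([0,\frac12)\). A strictly convex function has at most two zeros, and \(\frac12\) is one of them, so this root is unique. Convexity also gives \(\partial_u g_p(u_{\alpha,p})<0\). The implicit function theorem then makes \(p\mapsto u_{\alpha,p}\) smooth with
\[
\partial_p u=\frac{h(u)-\frac12}{\partial_u g_p}>0 .
\]
Moreover \(u_{\alpha,p}\to\frac12\) as \(p\uparrow(\alpha-1)/\alpha\); otherwise a limit point would be a second root of the limiting convex function.

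\textbf{Step 4: the stability threshold.} Using \(h'(1-u)=h'(u)\) and \(h'(\frac12)=\alpha\), the Jacobian at \((u,1-u)\) is
\[
\bigl(-1+(1-p)h'(u)\bigr)I+\tfrac{p\alpha}{2}\begin{pmatrix}1&1\\1&1\end{pmatrix}.
\]
Its eigenvalues are \(-1+(1-p)h'(u)\) (eigenvector \((1,-1)\)) and
\[
\psi(p):=-1+(1-p)h'(u_{\alpha,p})+p\alpha \quad(\text{eigenvector }(1,1)).
\]
So \(\lambda_+=\psi(p)\).

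The endpoint values are \(\psi(0)=-1\), because \(h'(0)=0\), and \(\psi(p)\to\alpha-1>0\) as \(p\uparrow(\alpha-1)/\alpha\). Differentiating,
\[
\psi'(p)=\alpha-h'(u)+(1-p)h''(u)\,\partial_pu .
\]
This is strictly positive, because \(h'(u)<\alpha\), \(h''(u)\ge0\) and \(\partial_p u>0\). Hence \(\psi\) has a unique zero \(p_{\mathrm{ad},*}\in(0,\frac{\alpha-1}{\alpha})\), with the stated sign pattern on either side.

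For \(p<p_{\mathrm{ad},*}\), both eigenvalues are negative. The Hessian of \(L_p^{(\alpha)}\) is therefore negative definite, so the equilibrium is a hyperbolic sink and a strict local maximum of \(L_p^{(\alpha)}\). For \(p>p_{\mathrm{ad},*}\), instability holds by definition since \(\lambda_+>0\).

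\textbf{Main obstacle.} The delicate point is the uniqueness of the sign change of \(\lambda_+\). Its proof rests on strict convexity of \(h\) on \([0,\frac12]\) together with the monotonicity of \(u_{\alpha,p}\) in \(p\). I would verify the log-derivative computation in Step 2 carefully, since everything else hinges on it.
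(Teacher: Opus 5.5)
Your proof is correct and is essentially the paper's argument with the branch parametrized by $p$ instead of by $x$. The paper writes $p=p_{\mathrm{ad}}(x)=1-\rho(x)$ and shows $\rho'<0$ and that the $x$-derivative of $\lambda_+(x,1-x)$ is positive, using the same strict convexity of $h$ on $(0,\frac12)$ and the bound $h'<\alpha$; these are exactly your $\partial_p u_{\alpha,p}>0$ and $\psi'(p)>0$, transported through the inverse map.

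The only point left implicit is right-continuity of $p\mapsto u_{\alpha,p}$ at $p=0$, which you need so that $\psi$ is negative somewhere on $(0,\frac{\alpha-1}{\alpha})$. It follows from the same monotone-limit argument you use at the right endpoint.
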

\begin{proof}
We first separate the central point. Since \(h(1/2)=1/2\), the point
\(\left(\frac12,\frac12\right)\) is an equilibrium of \eqref{eq:F} for every \(p\in[0,1]\).
It was shown in \cite{Qin2024} that 
$\lambda_+\!\left(\frac12,\frac12\right)=\alpha-1>0$, so therefore $(1/2,1/2)$ is always unstable. We now determine the remaining
equilibria on the anti-diagonal.

By symmetry, it is enough to consider an anti-diagonal equilibrium of
the form
\[
(x,y)=(x,1-x),
\qquad
\text{for }0<x<\frac12.
\]
Since by \eqref{def:h}, we have 
\[
h(1-x)=1-h(x),
\quad\text{and }
h\left(\frac{x+y}{2}\right)=h\left(\frac12\right)=\frac12,
\]
the two equilibrium equations 
\begin{equation}\label{e.equi_eqns}
    \begin{cases}
0 =-x+(1-p)h(x)+p h\!\left(\frac{x+y}{2}\right)\\
0 =-y+(1-p)h(y)+p h\!\left(\frac{x+y}{2}\right)
\end{cases}
\end{equation}
are equivalent. Using $y=1-x$, both equations reduce to
\begin{equation}\label{e.reduced_equi}
x=(1-p)h(x)+\frac p2.
\end{equation}
We now define
\begin{equation}\label{eq:p-ad-x}
\rho(x)
:=
\frac{1-2x}{1-2h(x)}
\quad
\text{and }p_{\mathrm{ad}}(x):=1-\rho(x).
\end{equation}
For a fixed $p\in\left(0,\frac{\alpha-1}{\alpha}\right)$, if $x\in(0,1)$ satisfies the reduced equilibrium equation \eqref{e.reduced_equi}, then the definitions in \eqref{eq:p-ad-x} imply that $\rho(x)=1-p$ and hence $p_\mathrm{ad}(x)=p$. Throughout this proof, we use the definitions in \eqref{eq:p-ad-x} conditioned on \eqref{e.reduced_equi} to hold.

We want to show that \(p_{\mathrm{ad}}\) is strictly increasing, that is to show that 
\begin{equation}\label{e.rho'}
\rho'(x)=2\frac{\left(1-2x\right)h'(x)-\left(1-2h(x)\right)}{\left(1-2h(x)\right)^2}
<0.
\end{equation}

We first directly compute from \eqref{def:h} that
\begin{equation}\label{eq.h'}
    h'(x)=\frac{\alpha x^{\alpha-1}(1-x)^{\alpha-1}}{\bigl(x^\alpha+(1-x)^\alpha\bigr)^2}.
\end{equation}

For \(0<x<\frac12\), taking the logarithmic differentiation of $h'(x)$ gives
\[
\frac{\partial}{\partial x}\log h'(x)=\frac{h''(x)}{h'(x)}=(\alpha-1)\left(\frac1x-\frac1{1-x}\right)+\frac{2\alpha\bigl((1-x)^{\alpha-1}-x^{\alpha-1}\bigr)}{x^\alpha+(1-x)^\alpha}.
\]
Both terms on the right-hand side of the last equation are positive and $h'(x)>0$ when
\(0<x<\frac12\). Then we have
\[
h''(x)>0,\qquad\text{for }0<x<\frac12,
\]
which shows that \(h\) is strictly convex in this interval. By the convexity of $h$, we get
\[
h\left(\frac12\right)>h(x)+h'(x)\left(\frac12-x\right).
\]
Plugging \(h\left(\frac12\right)=\frac12\) in the above display and rearranging the terms gives
\[
1-2h(x)>\left(1-2x\right)h'(x),
\]
proving \eqref{e.rho'}.

Moreover, we can compute that $\lim_{x\downarrow0}\rho(x)=1$ and $\lim_{x\uparrow\frac12}\rho(x)=\frac1\alpha$.
Hence,
$p_{\mathrm{ad}}:(0,\tfrac12)\rightarrow\left(0,\frac{\alpha-1}{\alpha}\right)$
is a strictly increasing bijection. In particular, for every
$p\in\left(0,\frac{\alpha-1}{\alpha}\right)$,
there is a unique \(x_p\in(0,\frac12)\) such that $p_{\mathrm{ad}}(x_p)=p$ and \((x_p,1-x_p)\) is an equilibrium with respect to the parameter $p$. This proves the part \eqref{eq:u-alpha-p} of this Lemma.

At an anti-diagonal equilibrium $(x,1-x)$ for $x\in\left(0,\frac12\right)$, the Jacobian matrix has two real eigenvalues:
\begin{equation}\label{e.ad.lambda_pm}
\begin{cases}
    \lambda_+(x,1-x)=-1+\alpha p+(1-p)h'(x),\\
    \lambda_-(x,1-x)=-1+(1-p)h'(x),
\end{cases}
\end{equation}
where $p$ satisfies $p=p_\mathrm{ad}(x)$.
Due to \eqref{eq.h'}, the function $h'$ is symmetric about $\frac{1}{2}$, so \eqref{e.ad.lambda_pm} implies that the reflected equilibrium $(1-x,x)$ has the same eigenvalues as $(x,1-x)$.

Since we have $\rho(x)=1-p_\mathrm{ad}(x)=1-p$ by \eqref{eq:p-ad-x}, we can rewrite $\lambda_+(x,1-x)$ as
\begin{equation}\label{e.ad-lambda_+.x_p}
    \lambda_+\left(x,1-x\right)=\alpha-1-\rho(x)\bigl(\alpha-h'(x)\bigr).
\end{equation}

Differentiating the above display gives
\begin{equation}\label{e.ad-lambda'_+.x}
\lambda'_+(x,1-x)
=
-\rho'(x)\bigl(\alpha-h'(x)\bigr)
+\rho(x)h''(x).
\end{equation}
Since \(h'\) is strictly increasing on \((0,\frac12)\),
\(h'(\frac12)=\alpha\), and $x\in\left(0,\frac12\right)$, we have
$\alpha-h'(x)>0$.
We have already shown that
\[
\rho'(x)<0,\qquad
\rho(x)>0,\qquad
\alpha-h'(x)>0,\qquad
h''(x)>0.
\]
Plugging these into \eqref{e.ad-lambda'_+.x} shows that
\begin{equation}\label{ineq.ad_lambda'>0}
\lambda'_+(x,1-x)>0.
\end{equation}

Finally, since \(h'(x)\to0\) as \(x\downarrow0\), we have
\begin{equation}
    \begin{cases}
        \lim_{x\downarrow0}\lambda_+(x,1-x)=-1,\\
        \lim_{x\uparrow\frac12}\lambda_+(x,1-x)=\alpha-1>0.
    \end{cases}
\end{equation}
Combining this with \eqref{ineq.ad_lambda'>0}, there exists a unique \(x_*\in(0,\frac12)\) such that
\[
\lambda_+(x_*,1-x_*)=0.
\]
We define
\begin{equation}\label{e.p_ad,*}
p_{\mathrm{ad},*}:=p_{\mathrm{ad}}(x_*).
\end{equation}
Since both \(p_{\mathrm{ad}}(\cdot)\) and \(\lambda_+(\cdot,1-\cdot)\) are strictly increasing and each anti-diagonal equilibrium $(x,1-x)$ satisfies \eqref{e.equi_eqns} for $p=p_\mathrm{ad}(x)$, we have
\[
\lambda_+(x,1-x)
\begin{cases}
<0,
&\text{if }0<p<p_{\mathrm{ad},*},\\[1mm]
=0,
&\text{if }p=p_{\mathrm{ad},*},\\[1mm]
>0,
&\text{if }p_{\mathrm{ad},*}<p<\dfrac{\alpha-1}{\alpha}.
\end{cases}
\]
By \cite{Qin2024}, the equilibrium is asymptotically stable when
\(\lambda_+<0\), and it is unstable when \(\lambda_+>0\). At
\(p=p_{\mathrm{ad},*}\), the equilibrium is nonhyperbolic and
linearization alone does not determine its nonlinear stability.

At \(p=0\), it can be extended to the endpoint equilibria
\((0,1)\) and \((1,0)\). Their corresponding Jacobian matrix is $-I$, so their top eigenvalue is negative, meaning that these equilibria are asymptotically stable.
\end{proof}

\section{The scalar function \texorpdfstring{$\Phi_p$}{Phi}}

We now encode all equilibria in one off-diagonal sector by a single scalar equation. The first elementary fact will be used repeatedly.

\begin{lemma}\label{lem:hprime-monotone}
The function \(h'\) is symmetric about \(1/2\) and strictly decreases as \(|t-1/2|\) increases. In particular,
\begin{equation*}
 h''(t)<0,
 \qquad\text{for all } t\in\left(\frac12,1\right).
\end{equation*}
\end{lemma}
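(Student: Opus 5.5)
The plan has two steps. First obtain symmetry directly from the explicit formula \eqref{eq.h'}. Then reduce the monotonicity claim to the logarithmic-derivative computation already carried out in the proof of Lemma~\ref{lem:ad-stability}.

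\emph{Symmetry.} In
\[
h'(t)=\frac{\alpha t^{\alpha-1}(1-t)^{\alpha-1}}{\bigl(t^\alpha+(1-t)^\alpha\bigr)^2},
\]
both the numerator and the denominator are invariant under the substitution \(t\mapsto 1-t\). Hence \(h'(1-t)=h'(t)\), so \(h'\) is symmetric about \(1/2\). (Alternatively, differentiate the identity \(h(1-t)=1-h(t)\), which was already used in Lemma~\ref{lem:ad-stability}.) Differentiating the symmetry relation once more gives \(h''(1-t)=-h''(t)\).

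\emph{Monotonicity.} In the proof of Lemma~\ref{lem:ad-stability} we showed that \(h''(t)>0\) for \(t\in(0,1/2)\). That argument wrote
\[
\frac{h''(t)}{h'(t)}=(\alpha-1)\Bigl(\frac1t-\frac1{1-t}\Bigr)+\frac{2\alpha\bigl((1-t)^{\alpha-1}-t^{\alpha-1}\bigr)}{t^\alpha+(1-t)^\alpha},
\]
observed that both terms are positive for \(0<t<1/2\) because \(\alpha>1\), and used \(h'>0\) there. I would restate this computation briefly, noting that \(h'(t)>0\) on \((0,1)\). This shows \(h'\) is strictly increasing on \((0,1/2)\), that is, strictly decreasing as \(|t-1/2|\) grows on the left half. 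For \(t\in(1/2,1)\) we have \(1-t\in(0,1/2)\), so the reflection identity gives
\[
h''(t)=-h''(1-t)<0.
\]
Therefore \(h'\) is strictly decreasing on \((1/2,1)\), again as \(|t-1/2|\) increases. Combining the two halves with the symmetry gives the full statement, including the displayed inequality \(h''<0\) on \((1/2,1)\).

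There is no real obstacle here. The only point needing care is the sign check of the two terms in the logarithmic derivative. For \(0<t<1/2\) we have \(1/t>1/(1-t)\), and \((1-t)^{\alpha-1}>t^{\alpha-1}\) because \(\alpha-1>0\) and \(1-t>t\). Both facts are immediate.
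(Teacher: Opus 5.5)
Your proof is correct. It differs from the paper's only in the variable in which the logarithmic derivative is taken.

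The paper passes to the logit variable \(s=\log(t/(1-t))\), where \(h'(t(s))=\alpha\cosh^2(s/2)/\cosh^2(\alpha s/2)\). In that variable, symmetry about \(1/2\) becomes evenness in \(s\), and monotonicity on \((1/2,1)\) reduces to the one-line inequality \(\tanh(s/2)<\alpha\tanh(\alpha s/2)\) for \(s>0\). You instead reuse the \(t\)-variable computation from the proof of Lemma~\ref{lem:ad-stability} on \((0,1/2)\), and transfer it to \((1/2,1)\) via \(h''(1-t)=-h''(t)\).

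Your route needs no new computation. It also makes explicit that the convexity fact used in Lemma~\ref{lem:ad-stability} and the present lemma are the same statement. The paper's route is self-contained and handles both halves with one formula.

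The reflection step is not actually needed. For \(t\in(1/2,1)\), both terms of your displayed expression for \(h''/h'\) are already negative, since \(1/t<1/(1-t)\) and \((1-t)^{\alpha-1}<t^{\alpha-1}\).
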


\begin{proof}
Formula \eqref{eq.h'} gives the symmetry. Let \(s=\log(t/(1-t))\). Then \(t>1/2\) is equivalent to \(s>0\), and
\begin{equation*}
 h'(t(s))=
 \alpha\frac{\cosh^2(s/2)}{\cosh^2(\alpha s/2)}.
\end{equation*}
Thus, we obtain
\begin{equation*}
 \frac{\dd}{\dd s}\log h'(t(s))
 =\tanh\left(\frac s2\right)-\alpha\tanh\left(\frac{\alpha s}{2}\right)<0,
 \qquad \text{for }s>0 \text{ and }\alpha>1.
\end{equation*}
Since \(s\mapsto t(s)\) is strictly increasing, \(h'\) is strictly decreasing on \((1/2,1)\). The statement follows by symmetry.
\end{proof}

Throughout this section, we assume $p\in\left(0,\frac{\alpha-1}{\alpha}\right)$.
Define
\begin{equation}\label{eq:Ftilde}
 \widetilde F_p(t,z):=-t+(1-p)h(t)+ph(z).
\end{equation}
If \(z=(x+y)/2\), then the equilibrium equations are
\begin{equation*}
 \widetilde F_p(x,z)=0,
 \qquad
 \widetilde F_p(y,z)=0.
\end{equation*}
Thus, when \(z\) is fixed, both coordinates of an equilibrium must be roots of the same scalar equation.

We focus on the sector
\begin{equation}\label{def:Omega-plus}
 \Omega_+:=\left\{(x,y):0<x<\frac12<y<1,\ x+y>1\right\}.
\end{equation}
For \((x,y)\in\Omega_+\), we have \(z=(x+y)/2\in(1/2,3/4)\) and \(y>z>1/2>x\). Thus the upper coordinate of an equilibrium in \(\Omega_+\) must be a root of \(\widetilde F_p(t,z)=0\) with \(t>1/2\). The next lemma proves that this root is unique and depends analytically on \(z\).

\begin{lemma}\label{lem:upper-branch}
For each \(z\in(1/2,3/4)\), the equation
\begin{equation*}
 \widetilde F_p(t,z)=0
\end{equation*}
has a unique solution \(q_p(z)\) in \((1/2,1)\). This solution lies on the decreasing part of the graph \(t\mapsto\widetilde F_p(t,z)\), and this solution satisfies \(q_p(z)>z\). Moreover,
\begin{equation}\label{def:Dp}
 D_p(z):=\frac{\partial}{\partial t}\widetilde F_p(q_p(z),z)
 =-1+(1-p)h'(q_p(z))<0.
\end{equation}
The function \(q_p\) is analytic on \((1/2,1)\) and extends analytically to a neighborhood of \(z=1/2\).
\end{lemma}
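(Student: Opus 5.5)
The plan is to fix \(z\) and study the one-variable function
\[
f(t):=\widetilde F_p(t,z)=-t+(1-p)h(t)+ph(z),\qquad t\in\left[\tfrac12,1\right].
\]
Concavity of \(f\) will give existence, uniqueness and the sign of the derivative at the root. Analyticity will then follow from the analytic implicit function theorem. Nothing in the argument uses \(z<3/4\), so I will carry it out for every \(z\in(1/2,1)\), which also covers the analyticity claim on \((1/2,1)\).

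\emph{Existence and uniqueness.} I will first check the endpoint signs. Since \(h(1/2)=1/2\) and \(h(1)=1\),
\[
f(1/2)=p\bigl(h(z)-\tfrac12\bigr)>0,\qquad f(1)=-p\bigl(1-h(z)\bigr)<0,
\]
using \(p>0\) and \(1/2<h(z)<1\) for \(z\in(1/2,1)\). Next, Lemma~\ref{lem:hprime-monotone} gives \(f''(t)=(1-p)h''(t)<0\) on \((1/2,1)\), so \(f\) is strictly concave on \([1/2,1]\). By the intermediate value theorem there is a root \(q\in(1/2,1)\).

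At any such root \(q\) we must have \(f'(q)<0\). Otherwise, for \(t\in[1/2,q)\), concavity gives \(f(t)\le f(q)+f'(q)(t-q)\le 0\), which contradicts \(f(1/2)>0\). Since \(f'\) is decreasing, \(f'(t)<0\) for all \(t>q\). Hence \(f<0\) on \((q,1]\), and \(q\) is the unique root in \((1/2,1)\). This shows that \(q=:q_p(z)\) lies on the decreasing part of the graph, and that
\[
D_p(z)=-1+(1-p)h'(q_p(z))<0.
\]

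\emph{The inequality \(q_p(z)>z\).} Evaluating at \(t=z\) gives \(f(z)=h(z)-z\). I claim \(h(t)>t\) on \((1/2,1)\). The function \(h(t)-t\) vanishes at \(1/2\) and at \(1\), it is strictly concave on \([1/2,1]\) by Lemma~\ref{lem:hprime-monotone}, and its derivative at \(1/2\) equals \(h'(1/2)-1=\alpha-1>0\). A strictly concave function vanishing at both endpoints is positive in between, so the claim holds. Thus \(f(z)>0\), and since \(f<0\) on \((q,1]\), this forces \(z<q_p(z)\).

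\emph{Analyticity.} The function \(h\) is real-analytic on \((0,1)\), because its denominator \(t^\alpha+(1-t)^\alpha\) is positive there. Hence \(\widetilde F_p\) is analytic near each point \((q_p(z),z)\). Since \(\partial_t\widetilde F_p=D_p(z)\neq0\) there, the analytic implicit function theorem, together with uniqueness of the root, shows that \(q_p\) is analytic on \((1/2,1)\).

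\emph{Extension to \(z=1/2\).} This is the only delicate point, and I expect it to be the main obstacle. At \(z=1/2\) the point \(t=1/2\) becomes a root, and \(f'(1/2)=-1+(1-p)\alpha>0\) because \(p<(\alpha-1)/\alpha\). Concavity together with \(f(1)<0\) then yields exactly one further root \(q_*\in(1/2,1)\), and the same argument as above gives \(f'(q_*)<0\). Applying the implicit function theorem at \((q_*,1/2)\) produces an analytic branch defined near \(z=1/2\). Continuity shows that this branch coincides with \(q_p\) for \(z>1/2\), because for such \(z\) the root in \((1/2,1)\) is unique. This gives the analytic extension across \(z=1/2\).
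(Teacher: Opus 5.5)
Your proof is correct and is essentially the paper's argument: both use $h''<0$ on $(1/2,1)$ (Lemma~\ref{lem:hprime-monotone}) to fix the shape of $t\mapsto\widetilde F_p(t,z)$ on $[1/2,1]$, and combine it with the endpoint signs, the intermediate value theorem, and an analytic implicit/inverse function theorem. The difference is only organizational: the paper writes $\widetilde F_p(t,z)=g(t)+p\bigl(h(z)-\tfrac12\bigr)$ with $g$ independent of $z$ and inverts $g$ on its decreasing branch $(t_1,1)$, which gives analyticity on $(1/2-\varepsilon,1)$ in one stroke; you instead argue by concavity at each fixed $z$ and glue local implicit-function branches using uniqueness of the root (you need $p<(\alpha-1)/\alpha$ only at $z=1/2$, and there, since $f(1/2)=0$, the tangent-line step must use a point just to the right of $1/2$ rather than $t=1/2$ itself).
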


\begin{proof}
Let
\begin{equation}\label{def:g}
 g(t):=-t+(1-p)h(t)+\frac p2.
\end{equation}
Then we can write
\begin{equation*}
 \widetilde F_p(t,z)=g(t)+p\left(h(z)-\frac12\right).
\end{equation*}
By Lemma~\ref{lem:hprime-monotone} and the fact that \(g'(t)=-1+(1-p)h'(t)\), there is \(t_1\in(1/2,1)\) such that \(g\) increases on \([1/2,t_1]\) and decreases on \([t_1,1]\). Also, we have
\begin{equation*}
 g(t_1)>0,
 \qquad
 g(1)=-\frac p2.
\end{equation*}
Recall that $h'>0$ and $h\left(1/2\right)=1/2$. Hence, we can choose \(\eps>0\) small enough so that for every \(z\in(1/2-\eps,1)\), we have $\widetilde F_p(1,z)<0<\widetilde F_p(t_1,z)$, or equivalently, we have
\begin{equation*}
 -p\left(h(z)-\frac12\right)\in(g(1),g(t_1)).
\end{equation*}
From above, since \(g\) is strictly decreasing on \((t_1,1)\), for every \(z\in(1/2-\eps,1)\), there is a unique \(q_p(z)\in(t_1,1)\) satisfying
\begin{equation*}
 g(q_p(z))=-p\left(h(z)-\frac12\right),
\end{equation*}
or equivalently \(\widetilde F_p(q_p(z),z)=0\). Due to \(g'(t)<0\) on \((t_1,1)\), the inverse function $g^{-1}$ is analytic there by the implicit function theorem, and we can write
\begin{equation}\label{def:qp}
 q_p(z)=g^{-1}\left(-p\left(h(z)-\frac12\right)\right),
\end{equation}
which is analytic on \((1/2-\eps,1)\).

It remains to show \(q_p(z)>z\) for \(z>1/2\). If \(z\le t_1\), this follows from \(q_p(z)>t_1\). If \(z>t_1\), then
\begin{equation}\label{ineq:h-greater-than-id}
 \widetilde F_p(z,z)=h(z)-z>0,
\end{equation}
where the last inequality follows from the definition of $h$ in \eqref{def:h} that \(h(z)>z\) for \(z>1/2\). Since \(t\mapsto\widetilde F_p(t,z)\) strictly decreases on \((t_1,1)\) and vanishes at \(q_p(z)\), we get \(q_p(z)>z\). Finally, comparing \eqref{def:g} with the definition of $D_p(z)$ in \eqref{def:Dp}, we see that \(D_p(z)=g'(q_p(z))<0\), proving the last inequality.
\end{proof}

Define
\begin{equation}\label{eq:xp}
 x_p(z):=2z-q_p(z),
\end{equation}
and
\begin{equation}\label{eq:Phi}
 \Phi_p(z):=\widetilde F_p(x_p(z),z)
 =\widetilde F_p(2z-q_p(z),z).
\end{equation}
More explicitly, we write
\begin{equation}\label{eq:explicit_Phi}
 \Phi_p(z)=-x_p(z)+(1-p)h(x_p(z))+ph(z).
\end{equation}
Since \(q_p(3/4)\in(3/4,1)\), it follows that
\begin{equation*}
 x_p\!\left(\frac34\right)=\frac32-q_p\!\left(\frac34\right)
 \in\left(\frac12,\frac34\right).
\end{equation*}
Using the argument that derived \eqref{ineq:h-greater-than-id}, we have \(h(x_p(3/4))>x_p(3/4)\) and \(h(3/4)>3/4\). Hence, we get
\begin{equation*}
\Phi_p\!\left(\frac34\right)
>
-x_p\!\left(\frac34\right)+(1-p)x_p\!\left(\frac34\right)+p\frac34
=p\left(\frac34-x_p\!\left(\frac34\right)\right)>0.
\end{equation*}
This proves the following simple lemma.
\begin{lemma}\label{lem:endpoint-sign}
For every \(0<p<({\alpha-1})/{\alpha}\), we have
$\Phi_p\!\left(\frac34\right)>0$.
\end{lemma}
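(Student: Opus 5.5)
The argument evaluates $\Phi_p$ at $z=3/4$ directly from the defining formula \eqref{eq:explicit_Phi}. The only input needed is the location of $x_p(3/4)=3/2-q_p(3/4)$, together with the elementary fact that $h(t)>t$ for $t\in(1/2,1)$.

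\emph{Step 1: locate $x_p(3/4)$.} By Lemma~\ref{lem:upper-branch}, $q_p$ is defined and analytic on $(1/2-\eps,1)$, so it is defined at $z=3/4$. The argument in that lemma gives $q_p(z)>z$ for all $z>1/2$ in this range, hence $q_p(3/4)\in(3/4,1)$. Therefore
\[
x_p(3/4)=\tfrac32-q_p(3/4)\in\left(\tfrac12,\tfrac34\right).
\]
The lower bound uses $q_p(3/4)<1$. This holds because $q_p$ takes values in $(t_1,1)$, or alternatively because $\widetilde F_p(1,z)<0$.

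\emph{Step 2: the inequality $h(t)>t$ on $(1/2,1)$.} For $t\in(1/2,1)$ set $r=(1-t)/t\in(0,1)$. Then
\[
h(t)=\frac{1}{1+r^\alpha}>\frac{1}{1+r}=t,
\]
since $\alpha>1$ gives $r^\alpha<r$. We apply this at $t=x_p(3/4)$ and at $t=3/4$.

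\emph{Step 3: conclude.} Using $h(x_p)>x_p$ with coefficient $1-p>0$, and $h(3/4)>3/4$ with coefficient $p>0$,
\[
\Phi_p(3/4) > -x_p+(1-p)x_p+\tfrac34 p = p\left(\tfrac34-x_p(3/4)\right) > 0,
\]
where the last step uses Step 1.

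The only delicate point is Step 1: we must confirm that $3/4$ lies in the domain where Lemma~\ref{lem:upper-branch} guarantees $q_p(z)>z$ and $q_p(z)<1$. Both properties were established for every $z\in(1/2,3/4]$, indeed for every $z\in(1/2,1)$, so nothing further is needed.
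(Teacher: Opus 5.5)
Your proposal is correct and is essentially the paper's own argument: you place $x_p(3/4)=\tfrac32-q_p(3/4)$ in $(\tfrac12,\tfrac34)$ using $q_p(3/4)\in(\tfrac34,1)$ from Lemma~\ref{lem:upper-branch}, then apply $h(t)>t$ on $(\tfrac12,1)$ at $x_p(3/4)$ and at $\tfrac34$ to get $\Phi_p(3/4)>p\bigl(\tfrac34-x_p(3/4)\bigr)>0$. Two small additions are useful: your explicit check of $h(t)>t$ via $r=(1-t)/t$, and your remark that the proof of Lemma~\ref{lem:upper-branch} covers $z=3/4$, since the lemma's statement only names $(1/2,3/4)$.
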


By Lemma~\ref{lem:upper-branch}, the functions \(q_p\), \(x_p\), and \(\Phi_p\) are analytic near \(z=1/2\). Denote by \(u_{\alpha,p}\) the unique solution in \((0,1/2)\) of
\begin{equation*}
 u=(1-p)h(u)+\frac p2.
\end{equation*}
Then using \eqref{def:qp} and \eqref{eq:u-alpha-p}, we see that
\begin{equation*}
 q_p\!\left(\frac12\right)=1-u_{\alpha,p},
 \qquad
 x_p\!\left(\frac12\right)=u_{\alpha,p},
 \qquad
 \Phi_p\!\left(\frac12\right)=0.
\end{equation*}
The zero \(\Phi_p\!\left(\frac12\right)=0\) corresponds to the anti-diagonal equilibrium. In the next lemma, we identify the zeros of $\Phi_p$ in \((1/2,3/4)\).

\begin{lemma}\label{lem:encoding}
The zeros of \(\Phi_p\) in \((1/2,3/4)\) are in one-to-one correspondence with the equilibria in \(\Omega_+\). The correspondence is
\begin{equation*}
 z\mapsto (x,y)=(2z-q_p(z),q_p(z)).
\end{equation*}
\end{lemma}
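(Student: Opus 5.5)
We prove both directions of the correspondence directly from the definitions of \(q_p\), \(x_p\), and \(\Phi_p\).

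\emph{Zeros give equilibria.} Let \(z\in(1/2,3/4)\) with \(\Phi_p(z)=0\). Set \(y=q_p(z)\) and \(x=x_p(z)=2z-q_p(z)\). Then \((x+y)/2=z\) by construction. We also have \(\widetilde F_p(y,z)=0\) by Lemma~\ref{lem:upper-branch} and \(\widetilde F_p(x,z)=\Phi_p(z)=0\). Hence both equilibrium equations hold, and \((x,y)\in\Lambda_p^{(\alpha)}\).

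It remains to place \((x,y)\) in \(\Omega_+\). We know \(y=q_p(z)\in(1/2,1)\) and \(x+y=2z>1\), and \(x<z<y\) because \(q_p(z)>z\). To get \(x>0\), note that \(x\le 0\) would force \(x=0\), since then \(\widetilde F_p(x,z)=-x+(1-p)h(x)+ph(z)\ge ph(z)>0\) for \(x<0\). But \(x=0\) with \(z>1/2\) gives \(\widetilde F_p(0,z)=ph(z)>0\ne0\), so \(x>0\) (for \(x<0\) one also has \(h\) defined by \eqref{def:h} only on \([0,1]\); we restrict to \(x\in[0,1]\), which holds since \(x=2z-q_p(z)>1-1=0\) from \(z>1/2\) and \(q_p<1\)).

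The substantive point is \(x<1/2\). Here we invoke Proposition~\ref{prop:Qin-input}(ii). The point \((x,y)\) is an equilibrium with \(y\in(1/2,1)\), so it is not \((0,0)\), \((1/2,1/2)\), or \((1,1)\). It is also not on the diagonal in general, since \(x<y\). Hence it lies in one of the off-diagonal rectangles, and since \(y>1/2\) it must be the rectangle \((0,1/2)\times(1/2,1)\). This gives \(x<1/2\), and therefore \((x,y)\in\Omega_+\).

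If one prefers to avoid relying on (ii), there is a direct argument: \(x\ge 1/2\) would make \(x\) a second root of \(\widetilde F_p(\cdot,z)\) in \([1/2,1)\). The value \(x=1/2\) is excluded because \(\widetilde F_p(1/2,z)=p(h(z)-1/2)>0\). Uniqueness of the root in \((1/2,1)\) from Lemma~\ref{lem:upper-branch} then forces \(x=q_p(z)=y\), which contradicts \(x<y\). This is the cleaner route, and we will use it.

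\emph{Equilibria give zeros.} Let \((x,y)\in\Omega_+\) be an equilibrium and set \(z=(x+y)/2\). Since \(x+y>1\) and \(x<1/2\), \(y<1\), we get \(z\in(1/2,3/4)\). The point \(y\in(1/2,1)\) solves \(\widetilde F_p(y,z)=0\), so uniqueness in Lemma~\ref{lem:upper-branch} gives \(y=q_p(z)\). Then \(x=2z-y=x_p(z)\), and \(\Phi_p(z)=\widetilde F_p(x,z)=0\).

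\emph{Bijectivity.} The two maps \(z\mapsto(2z-q_p(z),q_p(z))\) and \((x,y)\mapsto(x+y)/2\) are mutually inverse on these sets, so the correspondence is one-to-one.

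The only step that is not routine is showing \(x<1/2\), and this is handled by the uniqueness argument above.
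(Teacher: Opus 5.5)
Your proof is correct, and its skeleton matches the paper's. The direction from equilibria to zeros is the same: uniqueness of the upper root in Lemma~\ref{lem:upper-branch} forces \(y=q_p(z)\). In the converse you also get \(0<x<z<y<1\) from \(q_p(z)>z\) and \(2z>1>q_p(z)\).

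The one genuine difference is how \(x\ge 1/2\) is excluded. You note that \(x\) would then be a second root of \(\widetilde F_p(\cdot,z)\) in \([1/2,1)\). You rule out \(x=1/2\) via \(\widetilde F_p(1/2,z)=p(h(z)-1/2)>0\) and use uniqueness to force \(x=y\). The paper instead uses the elementary inequality \(h(t)\ge t\) on \([1/2,1]\) to show \(\Phi_p(z)>p(z-x)>0\) whenever \(x_p(z)\ge 1/2\).

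Both routes are self-contained. You were right to prefer yours over the appeal to Proposition~\ref{prop:Qin-input}(ii), which would make a purely deterministic lemma depend on the black-box input. The paper's version is slightly stronger because it is a sign statement rather than a non-vanishing statement. That is why it is reused in Lemma~\ref{lem:A-interval} and in Step~1 of the proof of Theorem~\ref{thm:main}, to deduce \(x_p(z)<1/2\) from \(\Phi_p(z)<0\). There \(x_p(z)\) is not a root, so your uniqueness argument would have to be upgraded to positivity of \(\widetilde F_p(\cdot,z)\) on \([1/2,q_p(z))\).

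A minor presentational point: in your paragraph on \(x>0\), the reasoning about \(x<0\) is ill-posed, since \(h\) is only defined on \([0,1]\). The parenthetical bound \(x=2z-q_p(z)>1-1=0\) is all that is needed.
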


\begin{proof}
Let \((x,y)\in\Omega_+\) be an equilibrium and put \(z=(x+y)/2\). Then, we have \(z\in(1/2,3/4)\) and
\begin{equation*}
 \widetilde F_p(x,z)=0,
 \qquad
 \widetilde F_p(y,z)=0.
\end{equation*}
Since \(y>z>1/2\), Lemma~\ref{lem:upper-branch} gives \(y=q_p(z)\). Hence, we obtain \(x=2z-q_p(z)=x_p(z)\) and \(\Phi_p(z)=0\).

Conversely, we suppose \(z\in(1/2,3/4)\) and \(\Phi_p(z)=0\). Set
\begin{equation*}
 x=x_p(z)=2z-q_p(z)\text{ and }
 y=q_p(z).
\end{equation*}
The definitions of \(q_p\) and \(\Phi_p\) give \(\widetilde F_p(x,z)=\widetilde F_p(y,z)=0\), so \((x,y)\) is an equilibrium. Since \(q_p(z)>z\), we have \(0<x<z<y<1\). If \(x\ge1/2\), then \(h(x)\ge x\), \(h(z)>z\), and \(z>x\), which gives
\begin{equation}\label{ineq.bd_Phi_p(z)>0}
\Phi_p(z)
 =-x+(1-p)h(x)+ph(z)
 >-x+(1-p)x+pz=p(z-x)>0,
\end{equation}
which contradicts $\Phi_p(z)=0$. Therefore, we have \(0<x<1/2<y<1\). Since \(x+y=2z>1\), we have \((x,y)\in\Omega_+\).
\end{proof}
\begin{figure}[t]
\centering
\begin{tikzpicture}[x=1cm,y=1cm]

\begin{scope}[shift={(0,0)}]
  \def\m{0.5}

  \draw[thick] (0,0) rectangle (4,4);

  \draw[dashed] (2,0) -- (2,4);
  \draw[dashed] (0,2) -- (4,2);

  \draw[gray!60] (0,0) -- (4,4);
  \draw[dash pattern=on 2pt off 2pt, gray!70] (0,4) -- (4,0);

  \fill[gray!22] (0,4) -- (2,4) -- (2,2) -- cycle;

  \draw[gray] (0.9,4) -- (4,0.9);
  \draw[->] (2.7,2.2) -- (3,2.3);
  \node[right] at (3,2.4) {$x+y=2z$};
  \fill (1.45,3.45) circle (1.2pt);
  
  \draw[dashed] (1.45,3.45) -- (1.45,0);
  \draw[dashed] (1.45,3.45) -- (0,3.45);

  \node at (1.2,3.1) {$\Omega_+$};

  \node[below] at (1.45,0) {$x_p(z)$};
  \node[left] at (0,3.45) {$q_p(z)$};

  \draw (0,-0.08) -- (0,0.08);
  \draw (2,-0.08) -- (2,0.08);
  \draw (4,-0.08) -- (4,0.08);
  \draw[->](0,4) -- (0,4.3);
  \draw[->](4,0) -- (4.3,0);
  \draw (-0.08,0) -- (0.08,0);
  \draw (-0.08,2) -- (0.08,2);
  \draw (-0.08,4) -- (0.08,4);

  \node[below] at (0,0) {$0$};
  \node[below] at (2,0) {$\frac12$};
  \node[below] at (4,0) {$1$};

  \node[left] at (0,2) {$\frac12$};
  \node[left] at (0,4) {$1$};

  \node[right] at (4.3,0) {$x$};
  \node[above] at (0,4.3) {$y$};

  \node at (2,-1.15) {(a) $(x_p(z),q_p(z))$ in \(\Omega_+\)};
\end{scope}
\begin{scope}[shift={(6.0,1.15)}]

  \draw[->] (0,0) -- (4.7,0) node[right] {$z$};
  \draw[->] (0,-1.45) -- (0,1.65) node[above] {$\Phi_p(z)$};
  \draw (0.7,-0.06) -- (0.7,0.06);
  \draw (2.35,-0.06) -- (2.35,0.06);
  \draw (4.2,-0.06) -- (4.2,0.06);

  \node[below] at (0.7,0) {$\frac12$};
  \node[below] at (2.35,0) {$z_0$};
  \node[below] at (4.2,0) {$\frac34$};

  \draw[thin]
    plot[smooth] coordinates {
      (0.7,0)
      (1.2,0.55)
      (1.8,0.40)
      (2.35,0)
      (2.9,-0.65)
      (3.5,-0.85)
      (4.1,-0.70)
    };

  \fill (0.7,0) circle (1.1pt);
  \fill (2.35,0) circle (1.1pt);

  \draw[->, thick] (1.9,0.45) -- (2.62,-0.22);
  \node[above] at (3.85,0.18) {downward crossing};

  \node at (2.35,-2.3) {(b) The scalar function \(\Phi_p\)};
\end{scope}

\end{tikzpicture}
\caption{A point in \(\Omega_+\) lying on the line \(x+y=2z\) is an equilibrium precisely when
$\Phi_p(z)=\widetilde F_p(x_p(z),z)=0$.
The zeros of \(\Phi_p\) encode the equilibria in \(\Omega_+\), and a downward crossing corresponds to a stable nondominating equilibrium.}
\label{fig:scalar-reduction}
\end{figure}
The following lemma is a simple fact on the derivative at the anti-diagonal point of $\Phi_p$ (i.e. $\Phi'_p\left(\frac12\right)$).
\begin{lemma}\label{lem:Phi-half}
For \(u_{\alpha,p}\) defined by \eqref{eq:u-alpha-p}, one has
\begin{equation*}
 \Phi_p'\!\left(\frac12\right)
 =2\lambda_+(u_{\alpha,p},1-u_{\alpha,p}).
\end{equation*}
\end{lemma}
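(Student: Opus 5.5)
The plan is a direct differentiation of the explicit formula \eqref{eq:explicit_Phi} at $z=\frac12$, using the implicit characterization of $q_p$ from Lemma~\ref{lem:upper-branch}. Everything is analytic near $z=\frac12$ by that lemma, so the chain rule applies. We also use $x_p(\frac12)=u_{\alpha,p}$ and $q_p(\frac12)=1-u_{\alpha,p}$.

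\emph{Step 1: differentiate $\Phi_p$.} Since $x_p(z)=2z-q_p(z)$, differentiating \eqref{eq:explicit_Phi} gives
\[
\Phi_p'(z)=\bigl(-1+(1-p)h'(x_p(z))\bigr)\bigl(2-q_p'(z)\bigr)+p\,h'(z).
\]

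\emph{Step 2: compute $q_p'$.} Differentiate the defining identity $g(q_p(z))=-p\bigl(h(z)-\frac12\bigr)$ from \eqref{def:qp}. Since $g'(q_p(z))=D_p(z)\neq0$ by \eqref{def:Dp}, this yields
\[
q_p'(z)=-\frac{p\,h'(z)}{D_p(z)}.
\]

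\emph{Step 3: evaluate at $z=\frac12$.} The key input is the symmetry of $h'$ about $\frac12$ (Lemma~\ref{lem:hprime-monotone}, or directly \eqref{eq.h'}). It gives $h'(u_{\alpha,p})=h'(1-u_{\alpha,p})$, and therefore
\[
-1+(1-p)h'(x_p(\tfrac12))=D_p(\tfrac12).
\]
Write $D$ for this common value. Using also $h'(\frac12)=\alpha$, Steps 1 and 2 combine to
\[
\Phi_p'\!\left(\tfrac12\right)=D\Bigl(2+\frac{p\alpha}{D}\Bigr)+p\alpha=2D+2p\alpha=2\bigl(-1+\alpha p+(1-p)h'(u_{\alpha,p})\bigr).
\]
By \eqref{e.ad.lambda_pm} with $x=u_{\alpha,p}$, the right-hand side equals $2\lambda_+(u_{\alpha,p},1-u_{\alpha,p})$.

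The only point needing care is this last step. The derivative factor at $x_p(\frac12)$ must coincide with $D_p(\frac12)$, which is exactly what the symmetry of $h'$ provides. Once it does, the $1/D$ in $q_p'$ cancels cleanly. No genuine obstacle is expected beyond this.
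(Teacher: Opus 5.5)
Your proof is correct and follows essentially the same route as the paper. You differentiate $\Phi_p$ by the chain rule, get $q_p'(\tfrac12)=-p\alpha/D$ from the implicit equation for $q_p$, use the symmetry $h'(u_{\alpha,p})=h'(1-u_{\alpha,p})$ to identify both $t$-derivatives with the same $D$, and conclude $\Phi_p'(\tfrac12)=2D+2p\alpha=2\lambda_+(u_{\alpha,p},1-u_{\alpha,p})$ (the paper reads off this last formula from Qin's computation, which agrees with \eqref{e.ad.lambda_pm}).
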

\begin{proof}
Recall that $(u_{\alpha,p},1-u_{\alpha,p})$ is the anti-diagonal equilibrium on the boundary of $\Omega_+$ and that $q_p\left(\frac{1}{2}\right)=1-u_{\alpha,p}$. Let
\begin{equation*}
 d:=-1+(1-p)h'(1-u_{\alpha,p})=-1+(1-p)h'(u_{\alpha,p}),
\end{equation*}
where the last equality is due to the symmetry $h'(1-u_{\alpha,p})=h'(u_{\alpha,p})$.

We use \eqref{def:Dp} to see that
\begin{equation}\label{e.partial_tF.1/2=d}
 \frac{\partial}{\partial t}\widetilde F_p
 \left(q_p\left(\frac{1}{2}\right),\frac{1}{2}\right)= \frac{\partial}{\partial t}\widetilde F_p
 \left(1-q_p\left(\frac{1}{2}\right),\frac{1}{2}\right)=d.
\end{equation}
According to the computation in \cite{Qin2024}, the top eigenvalue of $DF^{(\alpha)}_p(u_{\alpha,p},1-u_{\alpha,p})$ is
\begin{equation}\label{eq:lambda-plus-antidiagonal}
    \lambda_+(u_{\alpha,p},1-u_{\alpha,p})=-1+p h'\left(\frac{1}{2}\right)+(1-p)h'(u_{\alpha,p})=d+p\alpha,
\end{equation}
where in the last equality we use the fact that $h'\left(\frac{1}{2}\right)=\alpha$.
We now compute the derivative of $\Phi_p$. The identity
\begin{equation*}
 \widetilde F_p(q_p(z),z)=0
\end{equation*}
holds for the analytic extension of $q_p$ near $z=\frac{1}{2}$.
Differentiating this identity gives
\begin{equation*}
 \frac{\partial}{\partial t}\widetilde F_p(q_p(z),z)q_p'(z)+ph'(z)=0.
\end{equation*}
Evaluating it at $z=\frac{1}{2}$ using \eqref{e.partial_tF.1/2=d} yields
\begin{equation}\label{eq:q-prime-half}
 d q_p'\left(\frac{1}{2}\right)+p\alpha=0.
\end{equation}

Differentiating $\Phi_p(z)$ gives
\begin{align*}
 \Phi_p'(z)
 &=
 \frac{\partial }{\partial t}\widetilde F_p(2z-q_p(z),z)
 \left(2-q_p'(z)\right)
 +
 ph'(z).
\end{align*}
At $z=\frac{1}{2}$, we similarly use \eqref{e.partial_tF.1/2=d} to get
\begin{equation*}
 \Phi_p'\left(\frac{1}{2}\right)
 =
 d\left(2-q_p'\left(\frac{1}{2}\right)\right)
 +
 p\alpha.
\end{equation*}
By \eqref{eq:q-prime-half} and \eqref{eq:lambda-plus-antidiagonal}, the above display implies that
\begin{equation*}
 \Phi_p'\left(\frac{1}{2}\right)=2d+2p\alpha=2\lambda_+(u_{\alpha,p},1-u_{\alpha,p}),
\end{equation*}
proving the result.
\end{proof}

\section{Stability in scalar form}

Let \((x,y)=(2z-q_p(z),q_p(z))\), where \(z=(x+y)/2\). The Jacobian of \(F_p^{(\alpha)}\) at \((x,y)\) is
\begin{equation}\label{eq:J-ABD}
 J_{(x,y)}=
 \begin{pmatrix}
 A(x)+B(z) & B(z)\\
 B(z) & D(y)+B(z)
 \end{pmatrix},
\end{equation}
where we set
\begin{equation}\label{eq:ABD}
 A(x):=-1+(1-p)h'(x),
 \qquad
 D(y):=-1+(1-p)h'(y),
 \qquad
 B(z):=\frac p2 h'(z).
\end{equation}
Differentiating the equation \(\widetilde F_p(q_p(z),z)=0\) gives $D(y)q_p'(z)+2B(z)=0$, which gives
\begin{equation}\label{eq:q-prime}
 q_p'(z)=-\frac{2B(z)}{D(y)}.
\end{equation}
By \eqref{def:Dp}, we see that \(D(y)<0\), and since \(B(z)>0\), we have \(q_p'(z)>0\). On the other hand, differentiating \(\Phi_p\) gives
\begin{equation}\label{eq:Phi-prime}
 \Phi_p'(z)=A(x)(2-q_p'(z))+2B(z).
\end{equation}
Combining \eqref{eq:q-prime} and \eqref{eq:Phi-prime}, we obtain the determinant identity
\begin{align}
 \frac{D(y)}{2}\Phi_p'(z)
 &=\frac{D(y)}{2}\left[A(x)\left(2+\frac{2B(z)}{D(y)}\right)+2B(z)\right]\nonumber\\
 &=A(x)D(y)+A(x)B(z)+B(z)D(y)
 =\det J_{(x,y)}.\label{eq:det-identity}
\end{align}
By Lemma~\ref{lem:encoding}, there is a one-to-one correspondence between the zeros of $\Phi_p$ on $(1/2,3/4)$ and the equilibria in $\Omega_+$.
We now prove the following lemma that provides sufficient conditions for the equilibria in $\Omega_+$ to be asymptotically stable. 
\begin{lemma}\label{lem:nondegenerate-stable}
Suppose \(z_0\in(1/2,3/4)\) satisfies
\begin{equation*}
 \Phi_p(z_0)=0,
 \qquad
 \Phi_p'(z_0)<0,
 \qquad
 0<q_p'(z_0)<2.
\end{equation*}
Then the corresponding equilibrium
\begin{equation*}
 (x_0,y_0)=(2z_0-q_p(z_0),q_p(z_0))
\end{equation*}
is asymptotically stable. Or equivalently, one has \(\lambda_+(x_0,y_0)<0\).
\end{lemma}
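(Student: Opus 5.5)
The plan is to show that the symmetric Jacobian $J_{(x_0,y_0)}$ in \eqref{eq:J-ABD} is negative definite. Because $J$ is a real symmetric $2\times2$ matrix (it is the Hessian of $L_p^{(\alpha)}$), this holds exactly when $\det J>0$ and $\tr J<0$. Both eigenvalues are then negative, so $\lambda_+(x_0,y_0)<0$ and the equilibrium is a nondegenerate strict local maximum of $L_p^{(\alpha)}$, hence asymptotically stable. The argument uses the notation $A=A(x_0)$, $D=D(y_0)$, $B=B(z_0)$ from \eqref{eq:ABD}.

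\emph{Step 1: the determinant.} By \eqref{def:Dp}, $D<0$. The determinant identity \eqref{eq:det-identity} gives
\[
\det J_{(x_0,y_0)}=\frac{D}{2}\Phi_p'(z_0),
\]
and this is a product of two negative quantities divided by $2$, so $\det J>0$. The two eigenvalues therefore have the same sign, and it remains to rule out the case where both are positive.

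\emph{Step 2: the sign of $A$.} This is where the hypothesis $0<q_p'(z_0)<2$ enters. From \eqref{eq:Phi-prime},
\[
A\bigl(2-q_p'(z_0)\bigr)=\Phi_p'(z_0)-2B<0,
\]
because $\Phi_p'(z_0)<0$ and $B=\tfrac p2h'(z_0)>0$. Since $2-q_p'(z_0)>0$, it follows that $A<0$.

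\emph{Step 3: the trace.} We have $\tr J=A+D+2B$, and the term $2B$ is positive, so the signs of $A$ and $D$ alone do not settle the trace. I would avoid the trace and instead look at a diagonal entry. If both eigenvalues were positive, $J$ would be positive definite, and then its $(2,2)$ entry $D+B$ would be positive. So it suffices to show $D+B<0$, which gives a contradiction.

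To obtain $D+B<0$, use \eqref{eq:q-prime}: $q_p'(z_0)=-2B/D$. The hypothesis $q_p'(z_0)<2$ then reads $-2B/D<2$. Multiplying by $D<0$ flips the inequality and gives $-2B>2D$, that is, $D+B<0$. Hence $J$ is not positive definite, and together with $\det J>0$ this makes $J$ negative definite, so $\lambda_+(x_0,y_0)<0$. Step 2 is then not actually needed; it only confirms that the other diagonal entry $A+B$ is consistent with negative definiteness.

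\emph{Main obstacle.} The only delicate point is to avoid trying to bound the trace through $A$ and $D$, where the positive term $2B$ gets in the way. The way around it is to combine the identity $\det J=\tfrac D2\Phi_p'$ with the reformulation of $q_p'(z_0)<2$ as the negativity of the diagonal entry $D+B$.
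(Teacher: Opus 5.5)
Your proof is correct. It reaches $\lambda_+(x_0,y_0)<0$ by a different route from the paper's.

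\textbf{Shared step.} Both arguments get $\det J_{(x_0,y_0)}=\frac{D}{2}\Phi_p'(z_0)>0$ from \eqref{eq:det-identity}.

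\textbf{The paper's route.} It then bounds the trace. It uses $\Phi_p'(z_0)<0$ and $0<q<2$ to get $A<-2B/(2-q)$, and combines this with $D=-2B/q$. The inequality $\frac1q+\frac1{2-q}\ge 2$ then gives $\tr J<-2B<0$.

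\textbf{Your route.} You observe that, since $D<0$, the hypothesis $q_p'(z_0)<2$ is exactly the statement that the diagonal entry $J_{22}=D+B$ is negative. A real symmetric $2\times2$ matrix with positive determinant and one negative diagonal entry is negative definite, so both eigenvalues are negative.

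\textbf{Comparison.} Your argument is shorter, and it shows that $\Phi_p'(z_0)<0$ is needed only for the determinant. You are right that your Step~2 can be dropped. Your version also matches how the lemma is actually used. Proposition~\ref{prop:no-source} proves $D(y_0)+B(z_0)<0$ directly in \eqref{eq:J22-negative} and only afterwards converts it into $q_p'(z_0)\in(0,2)$. With your argument, that detour through $q_p'$ could be skipped. The paper's trace computation adds only the explicit bound $\tr J<-2B$, which nothing later in the paper uses.
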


\begin{proof}
Fix \(z=z_0\in(1/2,3/4)\), write \(q=q_p'(z_0)\), \(B=B(z_0)\), \(A=A(x_0)\), and \(D=D(y_0)\). By \eqref{eq:det-identity}, since \(D<0\) and \(\Phi_p'(z_0)<0\), we have \(\det J_{(x_0,y_0)}>0\). It remains to prove \(\tr J_{(x_0,y_0)}<0\). From \eqref{eq:q-prime}, we already know that
\begin{equation*}
 D=-\frac{2B}{q}.
\end{equation*}
Moreover, using \eqref{eq:Phi-prime} and \(\Phi_p'(z_0)<0\) gives
\begin{equation*}
 A(2-q)+2B<0.
\end{equation*}
Since \(0<q<2\), one has
\begin{equation*}
 A<-\frac{2B}{2-q}.
\end{equation*}
Therefore, we can compute
\begin{equation*}
 \tr J_{(x_0,y_0)}=A+D+2B
 <-\frac{2B}{2-q}-\frac{2B}{q}+2B.
\end{equation*}
For \(0<q<2\), it is clear that
\begin{equation*}
 \frac1q+\frac1{2-q}\ge 2.
\end{equation*}
Since \(B>0\), the last two displays imply \(\tr J_{(x_0,y_0)}<0\). The matrix is real symmetric, so positive determinant and negative trace imply that both eigenvalues are negative.
\end{proof}

\section{Monotonicity in the interaction parameter}

The next lemma says that as long as the lower coordinate remains below \(1/2\), the monotonicity of the scalar function \(\Phi_p\) is increasing in the interaction parameter $p$. This is an important monotonicity property to be used in the proofs later.

\begin{lemma}\label{lem:Phi-monotone}
Fix \(z\in(1/2,3/4)\). Assume that
\begin{equation*}
 x_p(z)=2z-q_p(z)\in\left(0,\frac12\right).
\end{equation*}
Then one has
\begin{equation*}
 \frac{\partial}{\partial p}\Phi_p(z)>0.
\end{equation*}
\end{lemma}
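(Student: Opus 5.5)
The plan is to differentiate $\Phi_p(z)=\widetilde F_p(x_p(z),z)$ in $p$ with $z$ fixed, using the implicit definition of $q_p(z)$ from Lemma~\ref{lem:upper-branch}. Since $\widetilde F_p(t,z)=-t+(1-p)h(t)+ph(z)$, we have $\partial_p\widetilde F_p(t,z)=h(z)-h(t)$. Write $x=x_p(z)$, $q=q_p(z)$, and let $A=A(x)$, $D=D(q)$ be as in \eqref{eq:ABD}.

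First, differentiating $\widetilde F_p(q_p(z),z)=0$ in $p$ gives $D\,\partial_p q+h(z)-h(q)=0$, so
\begin{equation*}
 \partial_p q=\frac{h(q)-h(z)}{D}<0,
\end{equation*}
because $q>z$ (Lemma~\ref{lem:upper-branch}), $h$ is increasing, and $D<0$. Second, since $x_p=2z-q_p$, we have $\partial_p x_p=-\partial_p q$. Hence
\begin{equation*}
 \partial_p\Phi_p(z)=-A\,\partial_p q+h(z)-h(x)
 =\frac{-A}{D}\bigl(h(q)-h(z)\bigr)+\bigl(h(z)-h(x)\bigr).
\end{equation*}
The second term is positive because $x<z$. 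If $A\ge0$, the first term is also nonnegative ($-A/D\ge0$), and we are done. So everything reduces to the case $A<0$, that is, $(1-p)h'(x)<1$. In that case we must show $|A|\,(h(q)-h(z))<|D|\,(h(z)-h(x))$.

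For this case I would eliminate $p$ using the upper equilibrium identity $q=(1-p)h(q)+ph(z)$. Rearranged, it reads $(1-p)(h(q)-h(z))=q-h(z)$. Combined with $h(z)>z$, this gives
\begin{equation*}
 h(q)-h(z)<\frac{q-z}{1-p}=\frac{z-x}{1-p}.
\end{equation*}
The aim is then to bound $h(z)-h(x)$ from below by a multiple of $z-x$. Here the hypothesis $x<1/2$ is used. By convexity of $h$ on $(0,1/2)$ (Lemma~\ref{lem:ad-stability}), the slope of $h$ between $x$ and $1/2$ is at least $h'(x)$. By concavity on $(1/2,1)$ (Lemma~\ref{lem:hprime-monotone}), the slope between $1/2$ and $z$ is at least $(h(q)-h(1/2))/(q-1/2)$, and that in turn is at least the slope between $z$ and $q$. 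Feeding these secant bounds into $|A|=1-(1-p)h'(x)$ and $|D|=1-(1-p)h'(q)$ should give the required inequality. I would clear denominators and compare $h'(x)(q-h(z))$ against $(1-p)h'(q)(h(z)-h(x))$ together with the remaining terms $-h(q)+2h(z)-h(x)$; multiplying the target by $-D>0$ produces exactly these terms.

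The main obstacle is this last comparison in the case $A<0$. The sign of $\partial_p\Phi_p$ is not automatic there: it relies on the convex/concave structure of $h$ about $1/2$, and that is precisely why the lemma requires $x_p(z)<1/2$. If the direct secant estimates are not sharp enough, my fallback is to parametrize the argument by the log-odds variable $s=\log(t/(1-t))$ from Lemma~\ref{lem:hprime-monotone}. In that variable $h$ and $h'$ take hyperbolic form, and I would verify the inequality there.
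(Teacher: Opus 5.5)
Your computations of $\partial_p q$ and $\partial_p\Phi_p(z)$ are correct, and so is the case $A\ge0$. The gap is the case $A<0$, where you must show $|A|\,(h(q)-h(z))<|D|\,(h(z)-h(x))$. That inequality is the entire content of the lemma, and your plan for it is only a sketch (``should give''). As designed, it cannot work.

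The problem is your first move, $h(q)-h(z)=(q-h(z))/(1-p)<(z-x)/(1-p)$. It throws away $h(z)-z$, and the factor $1/(1-p)$ then amplifies the loss. After that step you would need $|A|\,(z-x)/(1-p)\le|D|\,(h(z)-h(x))$. This is false in general, so no secant lower bound on $h(z)-h(x)$ can rescue it. Concretely, take $\alpha=10$, $p=\tfrac45$, and $z\approx0.5101$ with $h(z)=\tfrac35$. Then:
\begin{itemize}
\item $q_p(z)\approx0.6799$ and $x_p(z)\approx0.3404\in(0,\tfrac12)$;
\item $|A|\approx0.988$ and $|D|\approx0.995$;
\item $|A|(z-x)/(1-p)\approx0.839$, while $|D|(h(z)-h(x))\approx0.596$.
\end{itemize}
Yet the true left side $|A|(h(q)-h(z))$ is only about $0.395$. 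The log-odds fallback is a hope, not an argument.

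What you are missing is that $x$ and $q$ are mirror images about $z$. With $\ell=q-z>0$, one has $x=z-\ell$ and $q=z+\ell$. Since $z>\tfrac12$, we get $|z-s-\tfrac12|<|z+s-\tfrac12|$ for every $s\in(0,\ell]$, so Lemma~\ref{lem:hprime-monotone} gives $h'(z-s)>h'(z+s)>0$. Taking $s=\ell$ yields $h'(x)>h'(q)$, i.e.\ $0<|A|<|D|$ when $A<0$. Integrating over $s\in(0,\ell]$ yields $0<h(q)-h(z)<h(z)-h(x)$. Hence
\[
|A|\,(h(q)-h(z))<|D|\,(h(q)-h(z))<|D|\,(h(z)-h(x)),
\]
which finishes the proof. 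This is exactly the paper's argument, written there with $C_x=1-(1-p)h'(x)$ and $C_q=1-(1-p)h'(q)$.

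Your cleared-denominator form was also fine as a target. But closing it needs precisely these two facts, $h(z)-h(x)>h(q)-h(z)$ and $h'(x)>h'(q)$, which you never establish. Note also that the argument compares $h'$ at mirror points $z\pm s$ rather than using secant estimates. It never uses $x_p(z)<\tfrac12$ and works verbatim if $x\ge\tfrac12$. So your diagnosis that this hypothesis is ``precisely why'' the sign holds is mistaken.
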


\begin{proof}
The identity \(\widetilde F_p(q_p(z),z)=0\) is equivalent to
\begin{equation}\label{eq:q-equation-p}
 q_p(z)=(1-p)h(q_p(z))+ph(z).
\end{equation}
Since \(\partial_t\widetilde F_p(q_p(z),z)<0\), the implicit function theorem allows us to differentiate \(q_p(z)\) with respect to \(p\). From \eqref{eq:q-equation-p}, we obtain
\begin{equation}\label{eq:partial-p-q}
 \frac{\partial}{\partial p}q_p(z)
 =\frac{h(z)-h(q_p(z))}{1-(1-p)h'(q_p(z))}.
\end{equation}
By \eqref{def:Dp} from Lemma~\ref{lem:upper-branch}, the denominator in \eqref{eq:partial-p-q} is positive. By the facts that $h$ is increasing and $q_p(z)>z$ from Lemma~\ref{lem:upper-branch}, the numerator in \eqref{eq:partial-p-q} is negative. Hence, we deduce that
\begin{equation}\label{eq:pq-negative}
 \frac{\partial}{\partial p}q_p(z)<0.
\end{equation}
Recall $\Phi_p$ defined in \eqref{eq:explicit_Phi}. Since \(x_p(z)=2z-q_p(z)\), differentiating $\Phi_p(z)$ in \(p\) gives
\begin{equation}\label{eq:partial-p-Phi}
 \frac{\partial}{\partial p}\Phi_p(z)
 =h(z)-h(x_p(z))+\bigl(1-(1-p)h'(x_p(z))\bigr)\frac{\partial}{\partial p}q_p(z).
\end{equation}
Combining \eqref{eq:partial-p-Phi} with \eqref{eq:partial-p-q} and rearranging the terms, we get
\begin{align}
&\bigl(1-(1-p)h'(q_p(z))\bigr)\frac{\partial}{\partial p}\Phi_p(z)\nonumber\\
&=\bigl(1-(1-p)h'(q_p(z))\bigr)\bigl(h(z)-h(x_p(z))\bigr)\nonumber\\
&\qquad -\bigl(1-(1-p)h'(x_p(z))\bigr)\bigl(h(q_p(z))-h(z)\bigr).\label{eq:key-monotonicity-identity}
\end{align}
Let \(\ell=q_p(z)-z>0\). Then we can rewrite \(x_p(z)=z-\ell\), \(q_p(z)=z+\ell\). For every \(s\in(0,\ell]\), since $z>1/2$, we must have
\begin{equation*}
 \left|z-s-\frac12\right|<\left|z+s-\frac12\right|.
\end{equation*}
By Lemma~\ref{lem:hprime-monotone}, the above display implies that
\begin{equation*}
 h'(z-s)>h'(z+s)>0,
 \qquad \text{for all } 0<s\le\ell.
\end{equation*}
Integrating them in \(s\) gives
\begin{equation}\label{eq:h-increment}
 h(z)-h(x_p(z))>h(q_p(z))-h(z)>0,
\end{equation}
and also taking $s=l$ shows
\begin{equation}\label{eq:hprime-comparison}
 h'(x_p(z))>h'(q_p(z)).
\end{equation}
Set
\begin{equation*}
 C_x=1-(1-p)h'(x_p(z)),
 \qquad
 C_q=1-(1-p)h'(q_p(z)).
\end{equation*}
Clearly, we have $C_q>0$ due to \eqref{def:Dp}, and we have \(C_x<C_q\) due to \eqref{eq:hprime-comparison}. Now we consider two cases. If \(C_x\le0\), then \eqref{eq:key-monotonicity-identity} is strictly positive since it can be written as $C_q(h(z)-h(x_p(z)))-C_x(h(q_p(z))-h(z))$ and \eqref{eq:h-increment} can be applied. If \(C_x>0\), then \eqref{eq:h-increment} and \(C_x<C_q\) again make the right-hand side strictly positive. Since \(C_q>0\), the desired inequality follows.
\end{proof}

\section{No-source and downward crossings}
Recall the definition of the domination event $\mathcal D$ in \eqref{eq:domination-event}. Based on this, we say that an equilibrium is \emph{nondominating} if it is neither $(0,0)$ nor $(1,1)$, since $(0,0)$ and $(1,1)$ are the dominating equilibria.

In this section, the goal is to show that a zero of \(\Phi_p\) that is of a certain type corresponds to a stable nondominating equilibrium. 
The first proposition proves that a crossing zero of $\Phi_p$ gives a stable equilibrium when the slope at this zero is negative.

\begin{proposition}\label{prop:no-source}
Let \(z_0\in(1/2,3/4)\) be a zero of \(\Phi_p\), and set
\begin{equation*}
 y_0=q_p(z_0),
 \qquad
 x_0=2z_0-q_p(z_0).
\end{equation*}
Assume \((x_0,y_0)\in\Omega_+\). Then \(q_p'(z_0)\in(0,2)\). Consequently, if \(\Phi_p'(z_0)<0\), then \((x_0,y_0)\) is asymptotically stable and \(\lambda_+(x_0,y_0)<0\).
\end{proposition}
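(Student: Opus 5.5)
The first half of the claim is already available. By \eqref{eq:q-prime} we have $q_p'(z_0)=-2B(z_0)/D(y_0)$, with $B(z_0)=\frac p2h'(z_0)>0$, and $D(y_0)<0$ by \eqref{def:Dp}. Hence $q_p'(z_0)>0$. For the upper bound, since $-D(y_0)>0$, the inequality $q_p'(z_0)<2$ is equivalent to $B(z_0)<-D(y_0)$, that is,
\begin{equation*}
 (1-p)h'(y_0)+\frac p2 h'(z_0)<1 .
\end{equation*}
The ``consequently'' part then follows at once from Lemma~\ref{lem:nondegenerate-stable}: we will have $\Phi_p(z_0)=0$, $\Phi_p'(z_0)<0$ and $0<q_p'(z_0)<2$, so $\lambda_+(x_0,y_0)<0$. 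The whole proof therefore reduces to the displayed scalar inequality.

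To prove it, I will use both equilibrium equations, not only the one defining $q_p$. Write $\ell=y_0-z_0=z_0-x_0>0$ and $\delta=h(z_0)-z_0>0$. Subtracting $z_0$ from the two equations $x_0=(1-p)h(x_0)+ph(z_0)$ and $y_0=(1-p)h(y_0)+ph(z_0)$ gives
\begin{equation*}
 (1-p)\bigl(h(z_0)-h(x_0)\bigr)=\ell+\delta,\qquad (1-p)\bigl(h(y_0)-h(z_0)\bigr)=\ell-\delta .
\end{equation*}
Since $h$ is concave on $[z_0,y_0]\subset(1/2,1)$ by Lemma~\ref{lem:hprime-monotone}, the second identity yields $(1-p)h'(y_0)\ell\le \ell-\delta$. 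It then suffices to show $\frac p2h'(z_0)\,\ell<\delta$, i.e.
\begin{equation*}
 p\,h'(z_0)\,(z_0-x_0)<2\bigl(h(z_0)-z_0\bigr).
\end{equation*}
I also want an independent expression for $\delta$ in terms of $p$. The $z_0$-equation obtained by averaging is $z_0=(1-p)\frac{h(x_0)+h(y_0)}2+ph(z_0)$. This gives $\delta=(1-p)\bigl(h(z_0)-\frac{h(x_0)+h(y_0)}2\bigr)$, which is positive and equals $\frac{1-p}{2}$ times the asymmetry $[h(z_0)-h(x_0)]-[h(y_0)-h(z_0)]$ that appeared in \eqref{eq:h-increment}. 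It can also be written as $\delta=\frac{1-p}{2}\int_0^\ell\bigl(h'(z_0-s)-h'(z_0+s)\bigr)\dd s$.

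The main obstacle is comparing $p\,h'(z_0)\ell$ with this asymmetry integral. My first attempt will exploit the hypothesis $(x_0,y_0)\in\Omega_+$, in particular $x_0<1/2$. On $[x_0,1/2]$, $h$ is convex and $h'$ is large, and the first identity $(1-p)(h(z_0)-h(x_0))=\ell+\delta$ bounds how small the mean of $h'$ over $[x_0,z_0]$ can be. Combined with the unimodality of $h'$ from Lemma~\ref{lem:hprime-monotone}, this should show that the deficit $h'(z_0-s)-h'(z_0+s)$ is comparable to $p\,h'(z_0)$. If this direct chain does not close, the fallback is a continuity argument. At $q_p'(z_0)=2$ one has $x_p'(z_0)=0$, and \eqref{eq:det-identity} gives $\det J=A(x_0)\bigl(D(y_0)+B(z_0)\bigr)+B(z_0)D(y_0)$, which under $D+B=0$ reduces to $-B^2<0$. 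So I would try to rule out $D+B=0$ along equilibria in $\Omega_+$ by deforming from the anti-diagonal endpoint $z=1/2$, where $q_p'(1/2)=-p\alpha/d$ and $2\lambda_+=\Phi_p'(1/2)$ can be checked directly.
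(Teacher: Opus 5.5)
You correctly reduce $q_p'(z_0)<2$ to $D(y_0)+B(z_0)<0$, i.e.\ $(1-p)h'(y_0)+\frac p2h'(z_0)<1$, and the ``consequently'' part does follow from Lemma~\ref{lem:nondegenerate-stable}. The gap is that this inequality is never proved, and your direct chain cannot prove it. After the chord bound $(1-p)h'(y_0)\ell\le\ell-\delta$, the residual inequality $p\,h'(z_0)(z_0-x_0)<2\bigl(h(z_0)-z_0\bigr)$ is false in general.

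Equilibria in $\Omega_+$ with $z_0$ arbitrarily close to $\frac12$ exist. Write $\Phi_p(z)=(z-\frac12)\Psi_p(z)$. Then $\Psi_p(\frac12)=\Phi_p'(\frac12)=2\lambda_+(u_{\alpha,p},1-u_{\alpha,p})$ by Lemma~\ref{lem:Phi-half}, and this has strictly positive $p$-derivative at $p_{\mathrm{ad},*}$ (proof of Lemma~\ref{lem:ad-stability}). The implicit function theorem therefore gives $p=P(z)$ with $\Phi_{P(z)}(z)=0$ for all $z$ slightly above $\frac12$. The corresponding equilibria lie in $\Omega_+$ and converge to $(u_*,1-u_*)$, where $u_*=u_{\alpha,p_{\mathrm{ad},*}}$.

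Along them $h(z_0)-z_0\to0$, while $p\,h'(z_0)(z_0-x_0)\to p_{\mathrm{ad},*}\,\alpha\,(\frac12-u_*)>0$. Equivalently, your ``deficit'' $h'(z_0-s)-h'(z_0+s)$ tends to $0$ by the symmetry of $h'$, so it cannot be comparable to $p\,h'(z_0)$. The target inequality still holds there: $(1-p)h'(y_0)+\frac p2h'(z_0)\to1-\frac{p_{\mathrm{ad},*}\alpha}{2}$, because $(1-p)h'(u_*)=1-p\alpha$ when $\lambda_+=0$. But taking the chord of $h$ alone over $[z_0,y_0]$ discards a slack of about $p\alpha\ell$, which is exactly what was needed.

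The fallback is not an argument either. $D+B=0$ forcing $\det J=-B^2<0$ contradicts nothing, since saddles in $\Omega_+$ are allowed. Equilibria in $\Omega_+$ also need not lie on a branch connected to the anti-diagonal, so nothing prevents $D+B$ from changing sign along a deformation.

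The missing idea is to read your target as $\psi'(y_0)<0$ for $\psi(y):=F_{p,2}^{(\alpha)}(x_0,y)$, i.e.\ to vary $y$ with $x_0$ frozen rather than move along $z$. For $y\in[1-x_0,1]$, both $y$ and $(x_0+y)/2$ lie in $[\frac12,1]$; this is where $x_0<\frac12$ enters. So $\psi$, interaction term included, is strictly concave there by Lemma~\ref{lem:hprime-monotone}. Using $h(1-t)=1-h(t)$, one gets $\psi(1-x_0)=-F_{p,1}^{(\alpha)}(x_0,1-x_0)$. Since $F_{p,1}^{(\alpha)}(x_0,\cdot)$ is strictly increasing and $y_0>1-x_0$, this exceeds $-F_{p,1}^{(\alpha)}(x_0,y_0)=0=\psi(y_0)$. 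If $\psi'(y_0)\ge0$, concavity would give $\psi'\ge0$ on $[1-x_0,y_0]$, hence $\psi(1-x_0)\le\psi(y_0)$, a contradiction. So the right chord is that of the full function $\psi$ from the anti-diagonal point $1-x_0$ to $y_0$, which brings in the first equilibrium equation through the anti-diagonal symmetry; this is the paper's proof.
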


\begin{proof}
Fix \(x_0\) and define
\begin{equation*}
 \psi(y):=F_{p,2}^{(\alpha)}(x_0,y)
 =-y+(1-p)h(y)+p h\!\left(\frac{x_0+y}{2}\right),
 \qquad\text{for } y\in[1-x_0,1].
\end{equation*}
Since \(0<x_0<1/2\), both \(y\) and \((x_0+y)/2\) lie in \([1/2,1]\) for \(y\in[1-x_0,1]\). Then we can apply Lemma~\ref{lem:hprime-monotone} to see that
\begin{equation}\label{eq:concavity-psi}
 \psi''(y)=(1-p)h''(y)+\frac p4 h''\!\left(\frac{x_0+y}{2}\right)<0,
 \qquad\text{for } y\in(1-x_0,1),
\end{equation}
so \(\psi\) is strictly concave on $(1-x_0,1)$. Also, plugging in $y=1$ gives
\begin{equation*}
 \psi(1)=p\left[h\!\left(\frac{x_0+1}{2}\right)-1\right]<0.
\end{equation*}
At \(y=1-x_0\), using \(h(1-x_0)=1-h(x_0)\), we get
\begin{equation}\label{eq:psi-one-minus-x}
 \psi(1-x_0)=x_0-(1-p)h(x_0)-\frac p2
 =-F_{p,1}^{(\alpha)}(x_0,1-x_0).
\end{equation}
Since \((x_0,y_0)\) is an equilibrium and \(y_0>1-x_0\), while \(F_{p,1}^{(\alpha)}(x_0,y)\) is strictly increasing in \(y\), we must have
\begin{equation*}
 F_{p,1}^{(\alpha)}(x_0,1-x_0)<F_{p,1}^{(\alpha)}(x_0,y_0)=0.
\end{equation*}
Combining this with \eqref{eq:psi-one-minus-x}, we obtain \(\psi(1-x_0)>0\).

We claim \(\psi'(y_0)<0\). Suppose otherwise, then the strict concavity of $\psi$ implies \(\psi'(y)\ge0\) for \(y\in[1-x_0,y_0]\). Hence, we arrive at \(\psi(1-x_0)\le\psi(y_0)=0\), a contradiction. Therefore, we have
\begin{equation}\label{eq:J22-negative}
 \psi'(y_0)=-1+(1-p)h'(y_0)+\frac p2 h'(z_0)\stackrel{\eqref{eq:ABD}}{=}D(y_0)+B(z_0)<0.
\end{equation}
Thus, we get \(D(y_0)<-B(z_0)<0\). Applying this to \eqref{eq:q-prime} gives us the range
\begin{equation*}
 q_p'(z_0)=-\frac{2B(z_0)}{D(y_0)}\in(0,2).
\end{equation*}
The final assertion follows from Lemma~\ref{lem:nondegenerate-stable}.
\end{proof}
Let $z_0\in[1/2,3/4)$ be a zero of $\Phi_p$. We say that it is a \emph{downward crossing} of $\Phi_p$ if there exists \(\eps>0\) such that
\begin{equation}\label{eq:downward-crossing-assumption}
 (z-z_0)\Phi_p(z)<0
\end{equation}
for every \(z\in[1/2,3/4)\) with \(0<|z-z_0|<\eps\). See Figure~\ref{fig:scalar-reduction} for an illustration.
\begin{lemma}\label{lem:downward-crossing-stable}
Assume \(0<p<({\alpha-1})/{\alpha}\). Let \(z_0\in[1/2,3/4)\) be a zero of \(\Phi_p\), where at \(z_0=1/2\) the functions \(q_p\), \(x_p\), and \(\Phi_p\) are understood through their analytic extensions. Set
\begin{equation*}
 x_0=x_p(z_0),
 \qquad
 y_0=q_p(z_0),
\end{equation*}
and assume \(0<x_0<1/2<y_0<1\). Suppose that $z_0$ is a downward crossing of $\Phi_p$. Then \((x_0,y_0)\) is a strict local maximum of \(L_p^{(\alpha)}\). In particular, it is an asymptotically stable equilibrium.
\end{lemma}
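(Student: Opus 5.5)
The plan is to reduce the two-dimensional question to a one-variable one by maximizing \(L_p^{(\alpha)}\) in the \(y\)-direction first, and then to identify the derivative of the reduced function with \(\Phi_p\).

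\emph{Step 1: strict concavity in \(y\).} First I would show that \(\partial_y^2 L_p^{(\alpha)}(x_0,y_0)=\partial_y F_{p,2}^{(\alpha)}(x_0,y_0)=D(y_0)+B(z_0)<0\).
\begin{itemize}
\item For \(z_0\in(1/2,3/4)\), this is exactly \eqref{eq:J22-negative} in the proof of Proposition~\ref{prop:no-source}. That argument used only \(0<x_0<1/2\), the equilibrium property, and the concavity of \(\psi\); it did not use the sign of \(\Phi_p'\).
\item For \(z_0=1/2\) we have \(y_0=1-x_0\), and I would compute directly. With \(d=-1+(1-p)h'(u_{\alpha,p})\), we get \(D+B=d+p\alpha/2\). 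A downward crossing at \(1/2\) forces \(\Phi_p'(1/2)\le0\), so \(\lambda_+=d+p\alpha\le0\) by Lemma~\ref{lem:Phi-half}. Hence \(D+B\le -p\alpha/2<0\).
\end{itemize}

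\emph{Step 2: the reduced function.} By continuity, \(\partial_y^2L<0\) on a small box around \((x_0,y_0)\). The implicit function theorem then gives an analytic \(Y(x)\) with \(Y(x_0)=y_0\) and \(F_{p,2}^{(\alpha)}(x,Y(x))=0\). For each fixed \(x\), \(Y(x)\) is the unique strict maximizer of \(y\mapsto L(x,y)\) in the box. Set \(\ell(x):=L_p^{(\alpha)}(x,Y(x))\), so that \(\ell'(x)=F_{p,1}^{(\alpha)}(x,Y(x))\). A standard argument then shows that \((x_0,y_0)\) is a strict local maximum of \(L\) as soon as \(x_0\) is a strict local maximum of \(\ell\), since
\[
L(x,y)\le \ell(x)\le \ell(x_0),
\]
with equality only at \((x_0,y_0)\).

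\emph{Step 3: identifying \(\ell'\) with \(\Phi_p\).} I would parametrize the curve \(\{(x,Y(x))\}\) by \(z=(x+Y(x))/2\). Along the curve \(F_{p,2}=0\) with \(Y(x)>1/2\) near \(y_0\). Since \(\widetilde F_p(\cdot,z)\) has a unique root near \(y_0\) on its decreasing part, Lemma~\ref{lem:upper-branch} (and the analytic extension near \(z=1/2\)) gives \(Y(x)=q_p(z)\) and \(x=x_p(z)\). Consequently
\[
\ell'(x_p(z))=\widetilde F_p(x_p(z),z)=\Phi_p(z).
\]
Moreover \(dx_p/dz=2-q_p'(z)\), and by \eqref{eq:q-prime} together with Step 1, \(q_p'(z_0)=-2B/D\in(0,2)\). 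So \(z\mapsto x_p(z)\) is a local increasing diffeomorphism. The downward-crossing condition \((z-z_0)\Phi_p(z)<0\) then says \(\ell'>0\) to the left of \(x_0\) and \(\ell'<0\) to the right, so \(x_0\) is a strict local maximum of \(\ell\). Asymptotic stability then follows in the sense of Proposition~\ref{prop:Qin-input}, since \(L\) is a strict Lyapunov function for the gradient flow near the point. No nondegeneracy of \(\Phi_p'(z_0)\) is needed, which is the point of this lemma.

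\emph{Main obstacle: the case \(z_0=1/2\).} The downward-crossing hypothesis only controls \(z\ge1/2\), but the curve passes through values \(z<1/2\), i.e.\ \(x<x_0\). To cover that side I would use the symmetry \(\sigma(x,y)=(1-y,1-x)\). It preserves \(L_p^{(\alpha)}\) (up to an additive constant, by \(h(1-t)=1-h(t)\)) and fixes the anti-diagonal point \((u_{\alpha,p},1-u_{\alpha,p})\).

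The idea is to redo Steps 1–3 with the roles of the coordinates swapped. One reduces in \(x\) first, using \(\partial_x^2L=A+B=d+p\alpha/2<0\), which gives a curve \(X(y)\) and a reduced function \(\tilde\ell(y)\). The map \(\sigma\) sends the curve \(\{(x,Y(x))\}\) onto \(\{(X(y),y)\}\), and it sends points with \(z<1/2\) to points with \(z>1/2\). Therefore the sign of \(\ell'\) on \(x<x_0\) is read off from \(\Phi_p\) on \(z>1/2\), where the hypothesis applies.

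I expect to check carefully that this symmetric transfer gives \(\ell'>0\) for \(x<x_0\). The key identity is \(F_{p,1}(\sigma w)=-F_{p,2}(w)\). The fallback, if the symmetry bookkeeping proves awkward, is a direct Taylor expansion of \(\Phi_p\) at \(1/2\), using analyticity and the leading nonzero odd-order term.
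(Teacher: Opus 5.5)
Your Steps 1--3 follow the paper's proof: vertical concavity $D+B<0$, the implicit curve $Y$, the reduced potential $\ell$ with $\ell'(x_p(z))=\Phi_p(z)$, and $x_p'(z_0)=2-q_p'(z_0)>0$. So the case $z_0\in(1/2,3/4)$, and the right side $x>x_0$ when $z_0=1/2$, are fine. The gap is the left side at $z_0=1/2$. You flag it as the main obstacle but do not close it.

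The sentence ``the sign of $\ell'$ on $x<x_0$ is read off from $\Phi_p$ on $z>1/2$'' does not follow from the symmetry alone. Write $C_2=\{(x,Y(x))\}\subset\{F_{p,2}^{(\alpha)}=0\}$ and $C_1=\{(X(y),y)\}\subset\{F_{p,1}^{(\alpha)}=0\}$. Your identity $F_{p,1}^{(\alpha)}\circ\sigma=-F_{p,2}^{(\alpha)}$ converts $\ell'(x)=F_{p,1}^{(\alpha)}(x,Y(x))$, for $x<x_0$, into $-F_{p,2}^{(\alpha)}(X(y'),y')$ with $y'=1-x>y_0$. That is a statement about $F_{p,2}^{(\alpha)}$ on the \emph{other} nullcline $C_1$ inside $\{z>1/2\}$. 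The hypothesis instead concerns $F_{p,1}^{(\alpha)}$ on $C_2\cap\{z>1/2\}$. Because $\sigma$ swaps $C_1$ and $C_2$ rather than preserving $C_2$, you still need a comparison between the two nullclines, and that comparison is the missing idea.

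In sign form it is short. Since $X'(y_0)=-B/(A+B)>0$, for $y'>y_0$ the point $x':=X(y')$ exceeds $x_0$. The right-side result gives $F_{p,1}^{(\alpha)}(x',Y(x'))<0$, i.e.\ $x'>X(Y(x'))$. Hence $y'>Y(x')$, i.e.\ $F_{p,2}^{(\alpha)}(X(y'),y')<0$, which is exactly $\ell'(1-y')>0$.

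The paper avoids derivative signs and compares values instead, which is cleaner. By $L_p^{(\alpha)}(x,y)=L_p^{(\alpha)}(1-y,1-x)$ and vertical maximality, $\ell(x)=L_p^{(\alpha)}(1-Y(x),1-x)\le \ell(1-Y(x))<\ell(x_0)$ for $x<x_0$. The last inequality holds because $Y'(x_0)>0$ puts $1-Y(x)$ just to the right of $x_0$.

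Your Taylor fallback would not rescue this step. If $\Phi_p'(1/2)<0$, the left side is immediate. The symmetry is needed only when $\Phi_p'(1/2)=2\lambda_+=0$, which in the proof of Theorem~\ref{thm:main} is the only way this case arises (at $p=p_{\mathrm{ad},*}$). A one-sided sign condition on an analytic function with a zero of unknown order says nothing about the parity of its leading term. That parity is precisely what the nullcline comparison supplies.
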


\begin{proof}
We split the proof into two cases.

\smallskip
\noindent\textbf{Case 1: \(z_0\in(1/2,3/4)\).}
Since \(x_0+y_0=2z_0>1\), the assumptions imply \((x_0,y_0)\in\Omega_+\). By Proposition~\ref{prop:no-source}, we know that \(q_p'(z_0)<2\), and hence
\begin{equation}\label{eq:xp-prime-positive}
 x_p'(z_0)=2-q_p'(z_0)>0.
\end{equation}
Thus, the map \(z\mapsto x_p(z)\) is locally strictly increasing near \(z_0\). By \eqref{eq:J22-negative} from the proof of Proposition~\ref{prop:no-source}, we get
\begin{equation}\label{eq:J22-neg-for-IFT}
 \frac{\partial F_{p,2}^{(\alpha)}}{\partial y}(x_0,y_0)<0.
\end{equation}
Since \(F_p^{(\alpha)}=\nabla L_p^{(\alpha)}\), this is
\begin{equation}\label{eq:Lyy-negative}
 \frac{\partial^2 L_p^{(\alpha)}}{\partial y^2}(x_0,y_0)<0.
\end{equation}
By continuity, there are a constant \(c>0\) and a neighborhood \(U\) of \((x_0,y_0)\) such that \(\frac{\partial^2 L_p^{(\alpha)}}{\partial y^2}\le -c\) on \(U\). Due to \eqref{eq:J22-neg-for-IFT}, we can apply the implicit function theorem to see that there are an interval \(I\) containing \(x_0\) and a unique continuously differentiable function \(\eta:I\to(0,1)\) such that
\begin{equation}\label{eq:eta-IFT}
 \eta(x_0)=y_0\,
 \text{ and }\,
 F_{p,2}^{(\alpha)}(x,\eta(x))=0\,,
 \quad \text{for }x\in I.
\end{equation}
Shrinking \(I\) if necessary, we can choose \(\delta>0\) small enough so that \(\{(x,y):x\in I,\ |y-\eta(x)|<\delta\}\subset U\). Fix $x\in I$, then using Taylor's formula and the property in \eqref{eq:eta-IFT}, one has for \(0<|y-\eta(x)|<\delta\),
\begin{align}
 L_p^{(\alpha)}(x,y)-L_p^{(\alpha)}(x,\eta(x))
 &=(y-\eta(x))^2\int_0^1(1-s)
 \frac{\partial^2L_p^{(\alpha)}}{\partial y^2}
 \bigl(x,\eta(x)+s(y-\eta(x))\bigr)\dd s\nonumber\\
 &\le -\frac c2 (y-\eta(x))^2<0.\label{eq:Taylor-Lp}
\end{align}
Thus, for each fixed \(x\in I\), the function \(y\mapsto L_p^{(\alpha)}(x,y)\) has a strict local maximum at \(y=\eta(x)\).

We define the reduced potential by
\begin{equation*}
 \ell(x):=L_p^{(\alpha)}(x,\eta(x)).
\end{equation*}
Since \(F_{p,2}^{(\alpha)}(x,\eta(x))=0\), we deduce that
\begin{equation}\label{eq:ell-prime}
 \ell'(x)=F_{p,1}^{(\alpha)}(x,\eta(x)).
\end{equation}
The curve \(z\mapsto (x_p(z),q_p(z))\) also solves \(F_{p,2}^{(\alpha)}=0\). By the local uniqueness in \eqref{eq:eta-IFT}, for \(z\) sufficiently close to \(z_0\), we have
\begin{equation*}
 q_p(z)=\eta(x_p(z)).
\end{equation*}
Hence, plugging this into \eqref{eq:ell-prime} while taking $x=x_p(z)$, we obtain
\begin{equation}\label{eq:ell-prime-Phi}
 \ell'(x_p(z))=F_{p,1}^{(\alpha)}(x_p(z),q_p(z))=\Phi_p(z).
\end{equation}
Due to \eqref{eq:xp-prime-positive}, the sign of \(x_p(z)-x_0\) agrees with the sign of \(z-z_0\) near \(z_0\). Hence, the downward crossing assumption \eqref{eq:downward-crossing-assumption} implies that
\begin{equation*}
 \ell'(x)\begin{cases}>0\,,\quad\text{for }x<x_0\text{ close to }x_0,\\
 <0\,,\quad\text{for }x>x_0\text{ close to }x_0.
 \end{cases}
\end{equation*}
Thus, \(\ell\) has a strict local maximum at \(x_0\). Combining this with \eqref{eq:Taylor-Lp}, we get
\begin{equation*}
 L_p^{(\alpha)}(x,y)<L_p^{(\alpha)}(x_0,y_0)
\end{equation*}
for every \((x,y)\ne(x_0,y_0)\) sufficiently close to \((x_0,y_0)\).

\smallskip
\noindent\textbf{Case 2: \(z_0=1/2\).}
The one-sided sign condition gives \(\Phi_p'(1/2)\le0\). By Lemma~\ref{lem:Phi-half}, we have
\begin{equation}\label{eq:ad-lambda-nonpositive}
 \lambda_+(x_0,y_0)=\frac12\Phi_p'\!\left(\frac12\right)\le0.
\end{equation}
At the anti-diagonal equilibrium $z_0=1/2$, \eqref{e.ad.lambda_pm} and \eqref{eq:ABD} give us
\begin{equation}\label{eq:ad-lambda-D-B}
 \lambda_+(x_0,y_0)=D(y_0)+2B\!\left(\frac12\right).
\end{equation}
Applying the above display to \eqref{eq:J22-negative} shows that
\begin{equation}\label{eq:D(y_0)+B(1/2)}
 \frac{\partial F_{p,2}^{(\alpha)}}{\partial y}(x_0,y_0)
 =D(y_0)+B\!\left(\frac12\right)
 =\lambda_+(x_0,y_0)-B\!\left(\frac12\right)<0.
\end{equation}
Therefore, the implicit function theorem and the vertical maximum arguments from Case~$1$ apply and give a unique continuously differentiable function \(\eta\), a reduced potential \(\ell\), and the identity \eqref{eq:ell-prime-Phi} for \(z\) close to \(1/2\) from the right.

 Next, we check the slope of \(z\mapsto x_p(z)\) at \(1/2\). Taking $z=1/2$ and $y=y_0$ in \eqref{eq:q-prime} gives
\begin{equation*}
 q_p'\!\left(\frac12\right)=-\frac{2B(1/2)}{D(y_0)}>0.
\end{equation*}
Equations \eqref{eq:ad-lambda-nonpositive} and \eqref{eq:ad-lambda-D-B} imply \(D(y_0)\le -2B(1/2)\). Due to \eqref{def:Dp}, we have $D(y_0)<0$, and hence \(0<q_p'(1/2)\le1\). Thus, we get
\begin{equation}\label{eq:xp-prime-half-positive}
 x_p'\!\left(\frac12\right)=2-q_p'\!\left(\frac12\right)>0.
\end{equation}
This shows that for every \(x>x_0=x_p\left(\frac12\right)\) sufficiently close to \(x_0\), there is a unique \(z>z_0=1/2\) close to \(1/2\) with \(x=x_p(z)\). Applying the one-sided sign assumption to \eqref{eq:ell-prime-Phi} shows that
\begin{equation}\label{eq:right-local-max}
 \ell'(x)=\ell'(x_p(z))=\Phi_p(z)<0,
 \qquad\text{for }
 x>x_0\text{ close to }x_0.
\end{equation}
Thus
\begin{equation}\label{eq:downward-right-max}
 \ell(x)<\ell(x_0),
 \qquad x>x_0\text{ close to }x_0.
\end{equation}

It remains to approach \(x_0\) from the left. Recall the definition of $G(t)$ in \eqref{def:G}. Since \(h(1-t)=1-h(t)\), we have
\begin{equation*}
 G(1-t)=G(t)-t+\frac12.
\end{equation*}
Substitution into the expression of $L_p^{(\alpha)}(x,y)$ in \eqref{def:G} gives the symmetry
\begin{equation}\label{eq:L-symmetry}
 L_p^{(\alpha)}(x,y)=L_p^{(\alpha)}(1-y,1-x).
\end{equation}
Differentiating \(F_{p,2}^{(\alpha)}(x,\eta(x))=0\) in $x$ and evaluating at \(x=x_0\) show that
\begin{equation*}
    \frac{\partial F_{p,2}^{(\alpha)}}{\partial x}(x_0,y_0)+\frac{\partial F_{p,2}^{(\alpha)}}{\partial y}(x_0,y_0)\eta'(x_0)=0.
\end{equation*}
Plugging \eqref{eq:D(y_0)+B(1/2)} and \eqref{eq:ABD} into the display above, one has
\begin{equation}\label{eq:eta-prime-ad}
 \eta'(x_0)=-\frac{B(1/2)}{\lambda_+(x_0,y_0)-B(1/2)}>0.
\end{equation}
Since \(x_0+y_0=1\), for \(x<x_0\) sufficiently close to \(x_0\), the point \(1-\eta(x)\) is larger than \(x_0\) and close to \(x_0\). Using the symmetry \eqref{eq:L-symmetry}, the vertical maximality \eqref{eq:Taylor-Lp}, and \eqref{eq:downward-right-max}, we obtain
\begin{align*}
 \ell(x)
 &=L_p^{(\alpha)}(x,\eta(x))
 =L_p^{(\alpha)}(1-\eta(x),1-x)\\
 &\le L_p^{(\alpha)}(1-\eta(x),\eta(1-\eta(x)))
 =\ell(1-\eta(x))
 <\ell(x_0).
\end{align*}
Together with \eqref{eq:downward-right-max}, this proves that \(\ell\) has a strict local maximum at \(x_0\). Combining this with \eqref{eq:Taylor-Lp} then gives a strict local maximum of \(L_p^{(\alpha)}\) at \((x_0,y_0)\).
\end{proof}

\section{The deterministic critical point}

Define
\begin{equation}\label{eq:N-alpha}
 \calN_\alpha
 :=
 \left\{p\in\left(0,\frac{\alpha-1}{\alpha}\right):
 \exists z\in\left(\frac12,\frac34\right)\text{ such that }\Phi_p(z)<0\right\}.
\end{equation}
Let \(p_{\mathrm{ad},*}\) be the threshold from Lemma~\ref{lem:ad-stability}, and set
\begin{equation}\label{eq:A-alpha}
 \calA_\alpha:=(0,p_{\mathrm{ad},*})\cup\calN_\alpha.
\end{equation}
Define the deterministic critical point by
\begin{equation}\label{eq:pdet-alpha}
 p_{\mathrm{det}}(\alpha):=\sup\calA_\alpha.
\end{equation}

\begin{lemma}\label{lem:A-interval}
The set \(\calA_\alpha\) is downward closed. Hence, it is an interval of the form \((0,p_{\mathrm{det}}(\alpha))\), possibly including its right endpoint.
\end{lemma}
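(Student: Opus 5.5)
The plan is to prove downward closedness directly: take \(p\in\calA_\alpha\) and \(0<p'<p\), and show \(p'\in\calA_\alpha\). If \(p<p_{\mathrm{ad},*}\), then \(p'<p_{\mathrm{ad},*}\) and there is nothing to prove. The substantive case is \(p\in\calN_\alpha\). There I will fix a single \(z\in(1/2,3/4)\) with \(\Phi_p(z)<0\) and show that \(\Phi_{p'}(z)<0\) at the same \(z\). This gives \(p'\in\calN_\alpha\), since \(p'<p<(\alpha-1)/\alpha\). The tool is Lemma~\ref{lem:Phi-monotone}, applied along the segment \(r\in[p',p]\) with \(z\) frozen.

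The first ingredient is an a priori bound on \(x_r(z)=2z-q_r(z)\).
\begin{itemize}
\item Lower bound: \(x_r(z)>0\) holds for every \(r\in(0,(\alpha-1)/\alpha)\), because \(q_r(z)<1\) and \(2z>1\).
\item Upper bound: whenever \(\Phi_r(z)<0\) we also have \(x_r(z)<1/2\). This is exactly the argument in \eqref{ineq.bd_Phi_p(z)>0} from the proof of Lemma~\ref{lem:encoding}. Note that it uses only \(q_r(z)>z\) and not \(\Phi_r(z)=0\): if \(x_r(z)\ge1/2\), then \(h(x)\ge x\), \(h(z)>z\) and \(z>x\) force \(\Phi_r(z)>r(z-x_r(z))>0\).
\end{itemize}
So on any \(r\) with \(\Phi_r(z)<0\), the hypothesis \(x_r(z)\in(0,1/2)\) of Lemma~\ref{lem:Phi-monotone} holds automatically.

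The second ingredient is joint regularity of \((r,z)\mapsto q_r(z)\), and hence of \(\Phi_r(z)\), in \(r\in(0,(\alpha-1)/\alpha)\). This follows from the implicit function theorem applied to \(\widetilde F_r(t,z)=0\) in the variables \((t,r,z)\), using \(\partial_t\widetilde F_r(q_r(z),z)=D_r(z)<0\) from Lemma~\ref{lem:upper-branch}. This is also what justifies the \(p\)-derivative used in Lemma~\ref{lem:Phi-monotone}.

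With both ingredients, I run a continuity argument. Let
\[
r_*:=\inf\{r\in[p',p]:\Phi_s(z)<0\text{ for all }s\in[r,p]\}.
\]
On \((r_*,p]\), the first ingredient gives \(x_s(z)\in(0,1/2)\), so Lemma~\ref{lem:Phi-monotone} makes \(s\mapsto\Phi_s(z)\) strictly increasing there. Hence \(\Phi_s(z)\le\Phi_p(z)<0\) on that interval. By continuity in \(s\), \(\Phi_{r_*}(z)\le\Phi_p(z)<0\). If \(r_*>p'\), openness of the condition \(\Phi_s(z)<0\) would push the infimum further left, contradicting its definition. Therefore \(r_*=p'\) and \(\Phi_{p'}(z)<0\), as required.

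The main obstacle is guaranteeing the hypothesis \(x_r(z)<1/2\) of Lemma~\ref{lem:Phi-monotone} along the whole path. It is resolved precisely by the observation that negativity of \(\Phi_r(z)\) itself forces \(x_r(z)<1/2\), which is why the bootstrap is run on the set where \(\Phi_s(z)<0\).

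Finally, a downward closed subset of \((0,\infty)\) is an interval with left endpoint \(0\). Here \(\calA_\alpha\) is nonempty, since \(p_{\mathrm{ad},*}>0\) by Lemma~\ref{lem:ad-stability}. So \(\calA_\alpha\) equals \((0,\sup\calA_\alpha)\) or \((0,\sup\calA_\alpha]\), that is, \((0,p_{\mathrm{det}}(\alpha))\) possibly including its right endpoint.
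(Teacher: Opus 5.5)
Your proof is correct. Its skeleton is the paper's: freeze a $z$ with $\Phi_p(z)<0$, note via the argument of \eqref{ineq.bd_Phi_p(z)>0} that this forces $x_p(z)<1/2$, and integrate Lemma~\ref{lem:Phi-monotone} along $r\in[p',p]$.

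The genuine difference is in how you keep the hypothesis $x_r(z)\in(0,1/2)$ valid on the whole segment.

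\textbf{The paper's route.} It secures the hypothesis a priori. The sign $\partial_r q_r(z)<0$ from \eqref{eq:pq-negative} uses only $q_r(z)>z$ and $D_r(z)<0$, not $x_r(z)<1/2$. So it holds for every $r$, and it makes $r\mapsto x_r(z)=2z-q_r(z)$ increasing. Hence $0<x_{p'}(z)\le x_r(z)\le x_p(z)<1/2$ for all $r\in[p',p]$ at once, and the monotonicity lemma applies immediately.

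\textbf{Your route.} You run a continuity bootstrap on the set where $\Phi_s(z)<0$. It uses only the implication $\Phi_s(z)<0\Rightarrow x_s(z)<1/2$ and continuity of $s\mapsto\Phi_s(z)$. This avoids needing the sign of $\partial_r q_r$, at the cost of an infimum/openness argument.

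\textbf{Comparison.} The paper's version is shorter. Yours is self-contained given only the sign implication, and both are fully rigorous. Your explicit remark that $\calA_\alpha\neq\emptyset$, because $p_{\mathrm{ad},*}>0$, is a small point the paper leaves implicit.
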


\begin{proof}
It is enough to show that \(\calN_\alpha\) is downward closed. Let \(p_2\in\calN_\alpha\), and choose \(z\in(1/2,3/4)\) with \(\Phi_{p_2}(z)<0\). We first prove that \(x_{p_2}(z)<1/2\) in a way similar to the proof of Lemma~\ref{lem:encoding}. If \(x_{p_2}(z)\ge1/2\), then by the definition of $h$ in \eqref{def:h}, we have \(h(x_{p_2}(z))\ge x_{p_2}(z)\), \(h(z)>z\), and \(z>x_{p_2}(z)\). Therefore, by an argument similar to the one used to derive \eqref{ineq.bd_Phi_p(z)>0}, we obtain
\begin{equation*}
 \Phi_{p_2}(z)=-x_{p_2}(z)+(1-p_2)h(x_{p_2}(z))+p_2h(z)>-x_{p_2}(z)+(1-p_2)x_{p_2}(z)+p_2z
 =p_2(z-x_{p_2}(z))>0,
\end{equation*}
which contradicts the assumption that $\Phi_{p_2}(z)<0$.

Now fix \(0<p_1<p_2\). By the monotonicity shown in \eqref{eq:pq-negative}, for every \(r\in[p_1,p_2]\), we have
\begin{equation*}
 q_{p_2}(z)\le q_r(z)\le q_{p_1}(z),
\end{equation*}
and hence we get
\begin{equation*}
 0<x_{p_1}(z)\le x_r(z)\le x_{p_2}(z)<\frac12.
\end{equation*}
So throughout \([p_1,p_2]\), we can apply Lemma~\ref{lem:Phi-monotone}, by which one has \(\partial_r\Phi_r(z)>0\). This gives us
\begin{equation*}
 \Phi_{p_1}(z)<\Phi_{p_2}(z)<0,
\end{equation*}
which shows that \(p_1\in\calN_\alpha\). Therefore, we can conclude that \(\calN_\alpha\) and \(\calA_\alpha\) are downward closed.
\end{proof}

\section{Proof of the phase-transition theorem}

We now prove Theorem~\ref{thm:main}. Recall that we call an equilibrium \emph{nondominating} if it is not \((0,0)\) or \((1,1)\).

\begin{proof}[Proof of Theorem~\ref{thm:main}]
We split the proof into two steps.

\smallskip
\noindent\textbf{Step 1: if \(0\le p<p_{\mathrm{det}}(\alpha)\), then \(\PP_p^{(\alpha)}(\calD)<1\).}
For \(p=0\), the two urns evolve as independent single strongly reinforced
urns. We first show that there is positive probability that urn 1 draws only
red from time \(1\) onward. Indeed, the probability that urn 1 draws red at every future step is
\[
    \prod_{n=0}^{\infty}
    \frac{W(R_0(1)+n)}
    {W(R_0(1)+n)+W(B_0(1))}= \prod_{n=0}^{\infty}\frac{1}{1+\frac{W(B_0(1))}{W(R_0(1)+n)}}.
\]
Since we have
\[
    \sum_{n=0}^{\infty}
    \frac{W(B_0(1))}{W(R_0(1)+n)}
    =
    W(B_0(1))
    \sum_{n=0}^{\infty}
    \frac{1}{(R_0(1)+n)^\alpha}
    <\infty,
\]
we can deduce that the above infinite product is positive.

Similarly, since \(B_0(2)\ge 1\), the event that urn 2 draws black at every
future step has probability
\[
    \prod_{n=0}^{\infty}
    \frac{W(B_0(2)+n)}
    {W(B_0(2)+n)+W(R_0(2))}
    >0.
\]
At \(p=0\), the two urns are independent, so the probability of the event $(x_n,y_n)\longrightarrow (0,1)$ is the product of their probabilities and is
therefore positive. Hence, we have $\mathbb P^{(\alpha)}_0(\mathcal D)<1$ .

Now assume \(0<p<p_{\mathrm{det}}(\alpha)\). By Lemma~\ref{lem:A-interval}, we have \(p\in\calA_\alpha\). If \(p<p_{\mathrm{ad},*}\in\left(0,(\alpha-1)/\alpha\right)\), then by Lemma~\ref{lem:ad-stability}, the anti-diagonal equation has a unique solution
\(u_{\alpha,p}\in(0,1/2)\), and the two off-center anti-diagonal equilibria
\[
 (u_{\alpha,p},1-u_{\alpha,p})
 \quad\text{ and }\quad
 (1-u_{\alpha,p},u_{\alpha,p})
\]
are asymptotically stable. Proposition~\ref{prop:Qin-input} shows that the process converges to either of these equilibria with positive probability. This implies that $\PP_p^{(\alpha)}(\calD)<1$.

It remains to consider \(p\in\calN_\alpha\) with \(p\ge p_{\mathrm{ad},*}\). Choose \(z_1\in(1/2,3/4)\) such that \(\Phi_p(z_1)<0\). Due to \(\Phi_p(1/2)=0\) and \(p\ge p_{\mathrm{ad},*}\), we can apply Lemmas~\ref{lem:Phi-half} and \ref{lem:ad-stability} to see that
\begin{equation*}
 \Phi_p'\!\left(\frac12\right)\ge0.
\end{equation*}

If \(\Phi_p<0\) immediately to the right of \(1/2\), then \(z_0=1/2\) is a
one-sided downward crossing. Otherwise, set
\[
    A:=\{z\in(1/2,z_1):\Phi_p(z)<0\}\text{ and } z_0:=\inf A .
\]
Then, \(z_0\in(1/2,z_1)\). Since \(\Phi_p\) is analytic and
\(\Phi_p(3/4)>0\), it is not identically zero and its zeros are isolated.
By the definition of \(z_0\), \(\Phi_p\) is nonnegative on a left neighborhood
of \(z_0\), while it is negative arbitrarily close to \(z_0\) from the
right. Therefore, we have \(\Phi_p(z_0)=0\) and for $z$ sufficiently close to $z_0$,
\[
    \Phi_p(z)\begin{cases}
        >0 \quad\text{if } z<z_0,\\
        <0 \quad\text{if } z>z_0 .
    \end{cases}
\]
Thus, in either case, there exists $z_0\in[1/2,3/4)$ satisfying
\begin{equation}\label{pf.sign_condition}
    (z-z_0)\Phi_p(z)<0
\end{equation}
for all \(z\in[1/2,3/4)\) with \(0<|z-z_0|\) sufficiently small. Thus, \(z_0\) is a downward crossing of \(\Phi_p\).

We now check that the corresponding coordinates satisfy the hypotheses of Lemma~\ref{lem:downward-crossing-stable}. This is clear for \(z_0=1/2\), where the zero is the anti-diagonal equilibrium \((u_{\alpha,p},1-u_{\alpha,p})\). If \(z_0>1/2\), then by \eqref{pf.sign_condition}, we have \(\Phi_p(z)<0\) for \(z>z_0\) close to $z_0$. The argument used in the proof of Lemma~\ref{lem:A-interval} gives \(x_p(z)<1/2\), for \(z>z_0\) close to $z_0$. Passing to the limit yields \(x_p(z_0)\le1/2\). However, the equality is impossible because otherwise,
\begin{equation*}
 \Phi_p(z_0)=-\frac12+(1-p)\frac12+ph(z_0)=p\left(h(z_0)-\frac12\right)>0,
\end{equation*}
where the last inequality is due to $z_0>1/2$ and contradicts \(\Phi_p(z_0)=0\). Hence we have \(0<x_p(z_0)<1/2<q_p(z_0)<1\), which allows us to apply Lemma~\ref{lem:downward-crossing-stable} to obtain a strict local maximum of \(L_p^{(\alpha)}\), and hence an asymptotically stable nondominating equilibrium. By Proposition~\ref{prop:Qin-input}, the urn process converges to it with positive probability, giving \(\PP_p^{(\alpha)}(\calD)<1\).

\smallskip
\noindent\textbf{Step 2: if \(p>p_{\mathrm{det}}(\alpha)\), then \(\PP_p^{(\alpha)}(\calD)=1\).}
Suppose \(p\ge(\alpha-1)/\alpha\). Let \((x,y)\in[0,1]^2\) be an equilibrium with \(x\ne y\). Assume without loss of generality that \(x<y\). Subtracting the two equilibrium equations \eqref{e.equi_eqns} gives
\[
    y-x=(1-p)\bigl(h(y)-h(x)\bigr).
\]
By Lemma~\ref{lem:hprime-monotone}, we have \(h(y)-h(x)=\int_x^y h'(t)\,dt<\alpha(y-x)\). Plugging this into the right hand side of the above display gives $y-x<(1-p)\alpha (y-x)\le y-x$, a contradiction. Therefore, every equilibrium is diagonal and the only equilibria are $(0,0),\,(1/2,1/2),\,(1,1)$.
By Proposition~\ref{prop:Qin-input}, the process almost surely converges to either \((0,0)\) or \((1,1)\), and therefore \(\mathbb P_p^{(\alpha)}(D)=1\) in this case.

It remains to consider the case $p_{\mathrm{det}}(\alpha)<p<\frac{\alpha-1}{\alpha}$.
Then \(p\notin\calA_\alpha\), and in particular \(p>p_{\mathrm{ad},*}\) by the definition of $\mathcal{A}_\alpha$ in \eqref{eq:A-alpha}. Hence, the anti-diagonal equilibrium, if exists, is unstable by Lemma~\ref{lem:ad-stability}.

We claim that there is no equilibrium in \(\Omega_+\). Suppose otherwise, then by Lemma~\ref{lem:encoding}, for some \(z_0\in(1/2,3/4)\), we have
\begin{equation*}
 \Phi_p(z_0)=0,
 \qquad
 x_p(z_0)\in\left(0,\frac12\right).
\end{equation*}
Lemma~\ref{lem:Phi-monotone} gives \(\frac{\partial}{\partial p}\Phi_p(z_0)>0\). Hence, for every \(p'<p\) sufficiently close to \(p\), one has
\begin{equation*}
 \Phi_{p'}(z_0)<0.
\end{equation*}
This implies that \(p'\in\calN_\alpha\subseteq\calA_\alpha\). Letting \(p'\uparrow p\), we get \(p\le p_{\mathrm{det}}(\alpha)\), a contradiction. Therefore, \(\Omega_+\) contains no equilibrium.

The vector field is invariant under the two maps
$(x,y)\mapsto (y,x)$ and $(x,y)\mapsto (1-y,1-x)$.
The sector \(\Omega_+\) and its images under the group generated by these maps are precisely the four open sectors obtained from the two off-diagonal rectangles after removing the anti-diagonal \(x+y=1\). Hence, the absence of equilibria in \(\Omega_+\) implies that there are no equilibria in any of these open sectors.

By Proposition~\ref{prop:Qin-input}, every equilibrium other than
$(0,0)$, $(1/2,1/2)$, and $(1,1)$ lies in one of the two off-diagonal rectangles. So the only nondominating equilibria are
\((1/2,1/2)\) and possibly, off-center anti-diagonal equilibria. The equilibrium \((1/2,1/2)\) is unstable, and the off-center anti-diagonal equilibria, if exist, are unstable in the range \(p>p_{\mathrm{ad},*}\) by Lemma~\ref{lem:ad-stability}. Thus, every nondominating equilibrium is unstable.

The process converges almost surely to an equilibrium by Proposition~\ref{prop:Qin-input}. The same proposition gives zero probability of convergence to any unstable equilibrium. Since \(\Lambda_p^{(\alpha)}\) is finite, the probability of converging to a nondominating equilibrium is zero. Hence, the limit is almost surely either \((0,0)\) or \((1,1)\), and therefore $\PP_p^{(\alpha)}(\calD)=1$.

Combining the two steps, we obtain
\begin{equation*}
 \PP_p^{(\alpha)}(\calD)
 \begin{cases}
 <1, & p<p_{\mathrm{det}}(\alpha),\\
 =1, & p>p_{\mathrm{det}}(\alpha).
 \end{cases}
\end{equation*}
The endpoint value at \(p=p_{\mathrm{det}}(\alpha)\) does not affect the infimum and supremum in \eqref{eq:critical-parameters} and \eqref{eq:tilde-critical-parameter}. Therefore, we conclude that
\begin{equation*}
 \widetilde p_\alpha=p_\alpha=p_{\mathrm{det}}(\alpha).
\end{equation*}
\end{proof}

\section{Dependence on the reinforcement exponent}

We conclude by answering two questions posed in \cite[Section~7]{Qin2024} concerning the dependence of the critical interaction parameter on the reinforcement exponent. We first determine its first-order behavior as $\alpha\downarrow1$ and then show that it is not nondecreasing in $\alpha$.

\begin{proposition}\label{prop:p-alpha-weak-limit}
Let $s_0\in(0,1)$ be the unique solution of
\begin{equation}\label{eq:s-zero-definition}
 2s_0\operatorname{arctanh}(s_0)=1.
\end{equation}
Then
\begin{equation}\label{eq:p-alpha-weak-limit}
 \lim_{\alpha\downarrow1}\frac{p_\alpha}{\alpha-1}
 =\frac{1-s_0^2}{2s_0^2}.
\end{equation}
\end{proposition}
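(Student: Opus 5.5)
The plan is to combine Theorem~\ref{thm:main}, i.e.\ \(p_\alpha=p_{\mathrm{det}}(\alpha)=\sup\calA_\alpha\), with a first-order expansion in \(\eps:=\alpha-1\), working in the scaling \(p=c\eps\) with \(c\in(0,1)\). The anti-diagonal threshold from Lemma~\ref{lem:ad-stability} will give the lower bound. A uniform perturbation analysis of the scalar function \(\Phi_p\) will give the upper bound.

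\emph{Expansion.} Write \(t=(1+u)/2\), so that \(\log\frac{t}{1-t}=2\operatorname{arctanh}u\). Since \(\operatorname{logit}h(t)=\alpha\operatorname{logit}t\), we get
\[
h(t)=t+\eps\,t(1-t)\log\tfrac{t}{1-t}+O(\eps^2).
\]
In the \(u\) variable, \(t(1-t)\log\frac{t}{1-t}=\tfrac12 k(u)\) with \(k(u):=(1-u^2)\operatorname{arctanh}u\). The remainder can be taken uniform on compact subsets of \((0,1)\), and it is controlled near \(u=\pm1\) because \(t(1-t)|\log(t/(1-t))|\) is bounded. Similarly \(h'(t)=1+\eps k'(u)+O(\eps^2)\), where \(k'(u)=1-2u\operatorname{arctanh}u\).

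\emph{Lower bound.} At \(p=c\eps\), formula \eqref{e.ad.lambda_pm} gives
\[
\lambda_+(x,1-x)=-1+\alpha p+(1-p)h'(x)=\eps\,k'(u)+O(\eps^2).
\]
The anti-diagonal equation \eqref{e.reduced_equi} becomes \(k(u)=cu+O(\eps)\), with \(u=2x-1\). Since \(k\) is odd with \(k'(0)=1\), and \(k(u)/u\) decreases strictly from \(1\) to \(0\) on \((0,1)\), this equation has a unique root \(u_c\), which varies continuously in \(c\) and is nondegenerate. Hence \(\eps^{-1}\lambda_+\) converges to \(k'(u_c)\). This limit vanishes exactly when \(2u_c\operatorname{arctanh}u_c=1\), i.e.\ \(u_c=s_0\), i.e.
\[
c=c_*:=\frac{k(s_0)}{s_0}=\frac{1-s_0^2}{2s_0^2}.
\]
The limit is negative for \(c<c_*\): then \(u_c>s_0\), and \(u\operatorname{arctanh}u\) is increasing. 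Together with the monotone structure in Lemma~\ref{lem:ad-stability}, this yields \(p_{\mathrm{ad},*}(\alpha)/\eps\to c_*\). Since \((0,p_{\mathrm{ad},*})\subset\calA_\alpha\), it follows that \(\liminf_{\alpha\downarrow1} p_\alpha/(\alpha-1)\ge c_*\).

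\emph{Upper bound.} For \(c>c_*\) I will show \(c\eps\notin\calN_\alpha\) for small \(\eps\), i.e.\ \(\Phi_{c\eps}\ge0\) on \((1/2,3/4)\). Then Lemma~\ref{lem:A-interval} gives \(p_{\mathrm{det}}\le c\eps\). For \(c\ge1\) we may use \(p\ge(\alpha-1)/\alpha\), since \((\alpha-1)/\alpha=\eps+O(\eps^2)\). Let \(a,b,w\) be the \(u\)-variables of \(x,q,z\).
\begin{itemize}[leftmargin=*]
\item The equation defining \(q_p\) becomes \(k(b)=c(b-w)+O(\eps)\). This has a unique nondegenerate root \(b(w)>w\) near the branch selected in Lemma~\ref{lem:upper-branch}.
\item Then \(\eps^{-1}\Phi_{c\eps}(z)\to\tfrac12\Psi_c(w)\), where \(a=2w-b(w)\) and
\[
\Psi_c(w):=k(a)-c(a-w).
\]
Equivalently, \(\Psi_c(w)=k(a)-k(b)+c(b-a)\), using \(k(b)=c(b-w)\).
\item Zeros of \(\Psi_c\) correspond to pairs \(a<0<b\) with \(a+b>0\) and \(k(a)-ca=k(b)-cb\). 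Geometrically, these are two intersection points of the graph of the odd function \(k\), which is concave on \((0,1)\), with a line of slope \(c\).
\end{itemize}
The elementary claim to prove is that for \(c>c_*\) there is no such pair with \(a+b>0\), and that \(\Psi_c>0\) on \((0,1/2]\).
\begin{itemize}[leftmargin=*]
\item Equivalently, for \(0<-a<b\) with the chord slope \((k(b)-k(a))/(b-a)=c\), the quantity \(\Psi_c\) changes sign only when \(c\le k(s_0)/s_0\).
\item I would try to show this via the symmetry and concavity of \(k\), reducing the question to the monotonicity of the chord slope as \(a\) moves toward \(-b\).
\item The endpoint \(w=1/2\) is harmless: \(\Psi_c(1/2)>0\) by the limit of Lemma~\ref{lem:endpoint-sign}.
\end{itemize}

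The main obstacle is making this uniform. Near \(w=0\), both \(\Psi_c\) and \(\Phi_p\) vanish at \(z=1/2\), so pointwise positivity does not suffice. There I would use Lemma~\ref{lem:Phi-half}: \(\Phi_p'(1/2)=2\lambda_+\sim 2\eps k'(u_c)>0\) for \(c>c_*\). I would then expand to get \(\eps^{-1}\Phi_p'\to\tfrac12\Psi_c'\) uniformly, which gives positivity on a fixed neighbourhood of \(1/2\). Away from \(1/2\), uniform \(C^0\) convergence plus strict positivity of \(\Psi_c\) suffices. I expect the sign analysis of \(\Psi_c\), together with the control of the expansion as \(a\to-1\), to be the hardest part. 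Once it is in place, \(\limsup p_\alpha/(\alpha-1)\le c\) for every \(c>c_*\), which proves \eqref{eq:p-alpha-weak-limit}.
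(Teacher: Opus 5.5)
\textbf{Gap: the key inequality behind your upper bound is not proved, and your reformulation of it is wrong.}

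Your lower bound is essentially the paper's argument, parametrized by $c$ instead of by $x$. The upper bound, however, rests entirely on the claim that $\Psi_c$ has no zero in $(0,1/2)$ when $c>c_*$, and you only say you ``would try'' to prove it.

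The version you propose to prove is also not the right statement. Since $b=b(w)$ solves $k(b)=c(b-w)$ and $a=2w-b$, so that $w=(a+b)/2$, a zero of $\Psi_c$ forces two conditions:
\[
 k(a)+k(b)=0 \qquad\text{and}\qquad c=\frac{2k(b)}{b-a}.
\]
So $-a$ and $b$ lie at the same level of $k$, on opposite sides of its maximum $s_0$. Your geometric reformulation keeps only the chord-slope condition $k(a)-ca=k(b)-cb$ and drops $k(a)+k(b)=0$. With that condition alone the claim is false. For example, $a=-0.1$, $b=0.2$ gives $a+b>0$ and chord slope $\approx0.98\in(c_*,1)$, where $c_*\approx0.69$. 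What actually has to be shown is the sharp inequality
\[
 0<u_1<s_0<u_2<1,\quad k(u_1)=k(u_2)\quad\Longrightarrow\quad \frac{2k(u_2)}{u_1+u_2}<\frac{k(s_0)}{s_0}.
\]

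\textbf{Why your suggested tools cannot prove it, and what does.} Oddness and concavity of $k$ cannot deliver this inequality, because it is a quantitative statement about how the hump of $k$ is skewed about its maximum. Near the degenerate pair $u_1=u_2=s_0$, it reduces to leading order to $3k''(s_0)^2+c_*k'''(s_0)>0$. That is a third-order condition which concavity does not control.

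A naive midpoint comparison also fails, because $(u_1+u_2)/2$ can lie below $s_0$. For instance, $u_1=0.3$ forces $u_2<0.9$, since $k(0.9)<k(0.3)$.

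The paper proves the inequality in logit coordinates:
\begin{itemize}
\item With $u=\tanh r$ one has $k(\tanh r)=r\operatorname{sech}^2 r$. For $u_i=\tanh r_i$, the level condition becomes $\int_{r_1}^{r_2}(1/r-2\tanh r)\dd r=0$.
\item The integrand is strictly decreasing and strictly convex. Hermite--Hadamard then gives $(r_1+r_2)/2>a_0:=\operatorname{arctanh}s_0$.
\item Finally, $c=2\sqrt{r_1r_2}/\sinh(r_1+r_2)<(r_1+r_2)/\sinh(r_1+r_2)<2a_0/\sinh(2a_0)=c_*$.
\end{itemize}

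\textbf{Your route with this step inserted.} Once this inequality is in place, your deterministic upper bound goes through. Its ingredients are:
\begin{itemize}
\item uniform $C^0$ convergence of $\eps^{-1}\Phi_{c\eps}$ away from $z=1/2$;
\item $C^1$ convergence together with Lemma~\ref{lem:Phi-half} near $z=1/2$;
\item the fact $c\eps>p_{\mathrm{ad},*}(\alpha)$, which you should state explicitly; it follows from your lower-bound computation.
\end{itemize}
This is a genuine alternative to the paper's argument. The paper instead argues by contradiction: it uses Proposition~\ref{prop:Qin-input} to produce nondominating equilibria with $\lambda_+\le0$, and rules out an anti-diagonal limit via negative semidefiniteness of the rescaled limiting Jacobian. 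That approach avoids any $C^1$ control of $\Phi_p$ near $z=1/2$.
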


\begin{proof}
Set $\eps=\alpha-1$ and define
\begin{equation*}
 d(t):=t(1-t)\log\frac{t}{1-t},
 \qquad t\in(0,1),
\end{equation*}
with $d(0)=d(1)=0$. Taylor expansion in the exponent gives
\begin{equation}\label{eq:h-weak-expansion}
 \sup_{t\in[0,1]}
 \left|h_{1+\eps}(t)-t-\eps d(t)\right|
 =O(\eps^2).
\end{equation}
Indeed, writing $\ell(t)=\log(t/(1-t))$, we have
\begin{equation*}
 \frac{\partial}{\partial\beta}h_\beta(t)
 =h_\beta(t)(1-h_\beta(t))\ell(t)
\end{equation*}
and
\begin{equation*}
 \frac{\partial^2}{\partial\beta^2}h_\beta(t)
 =h_\beta(t)(1-h_\beta(t))(1-2h_\beta(t))\ell(t)^2.
\end{equation*}
The second expression is uniformly bounded for $\beta\in[1,2]$ and $t\in(0,1)$. For every compact subinterval $K\subset(0,1)$, it can be verified using Taylor's expansion of $h'_\beta(t)$ in $\beta$ that
\begin{equation}\label{eq:h-prime-weak-expansion}
 \sup_{t\in K}\left|h_{1+\eps}'(t)-1-\eps d'(t)\right|
 =O(\eps^2),\qquad
 d'(t)=1+(1-2t)\log\frac{t}{1-t}.
\end{equation}

We first determine the asymptotic behavior of the anti-diagonal threshold. For $x\in(0,1/2)$, let $p_{\mathrm{ad}}^{(1+\eps)}(x)$ denote the interaction parameter for which $(x,1-x)$ is an equilibrium. By \eqref{eq:p-ad-x},
\begin{equation}\label{eq:pad-x-weak-expansion}
 p_{\mathrm{ad}}^{(1+\eps)}(x)
 =\frac{x-h_{1+\eps}(x)}{\frac12-h_{1+\eps}(x)},
\end{equation}
By \eqref{eq:h-weak-expansion}, the numerator and denominator in
\eqref{eq:pad-x-weak-expansion} satisfy
\begin{align*}
 x-h_{1+\eps}(x)&=-\eps d(x)+O(\eps^2)\\
\frac12-h_{1+\eps}(x)&=\frac12-x-\eps d(x)+O(\eps^2).
\end{align*}
If $x$ ranges over a compact subinterval of $(0,1/2)$, then
$\frac12-x$ is bounded away from zero. Dividing the two expansions above, we get
\begin{equation}\label{eq:pad-x-weak-limit}
 \frac{p_{\mathrm{ad}}^{(1+\eps)}(x)}{\eps}
 =-\frac{d(x)}{\frac{1}{2}-x}+O(\eps),
\end{equation}
uniformly on every such compact subinterval.
Let $\lambda_{+,\eps}(x)$ be the first eigenvalue in \eqref{e.ad.lambda_pm}, evaluated at $p=p_{\mathrm{ad}}^{(1+\eps)}(x)$. Combining \eqref{eq:pad-x-weak-limit}, \eqref{e.ad.lambda_pm} and \eqref{eq:h-prime-weak-expansion}, we obtain
\begin{equation}\label{eq:lambda-ad-weak-limit}
\frac{\lambda_{+,\eps}(x)}{\eps}
 =d'(x)+O(\eps),
\end{equation}
uniformly on compact subintervals of $(0,1/2)$.
The function $d'$ is strictly increasing on $(0,1/2)$. Indeed, we can compute that
\begin{equation*}
 d''(x)=\frac{1-2x}{x(1-x)}-2\log\frac{x}{1-x} >0,
 \qquad \forall x\in\left(0,\frac12\right).
\end{equation*}
Moreover, $d'(x)\to-\infty$ as $x\downarrow0$ and
$d'(x)\to1$ as $x\uparrow1/2$. Hence, $d'$ has a unique zero
$x_0\in(0,1/2)$. By \eqref{ineq.ad_lambda'>0}, the function $\lambda_{+,\eps}$ is strictly increasing,
and its unique zero is $x_*(1+\eps)$. By definition, we have
$p_{\mathrm{ad},*}(1+\eps)=p_{\mathrm{ad}}^{(1+\eps)}\bigl(x_*(1+\eps)\bigr)$. 

Since \eqref{eq:lambda-ad-weak-limit} gives
$\frac{\lambda_{+,\eps}}{\eps}\stackrel{\eps\downarrow 0}{\longrightarrow} d'$ locally uniformly on $(0,1/2)$, the corresponding zeros satisfy
\begin{equation*}
 \lim_{\eps\downarrow0}x_*(1+\eps)=x_0.
\end{equation*}

Writing $s=1-2x$, we can rewrite the expression of $d'(x)$ in \eqref{eq:h-prime-weak-expansion} as
\begin{equation}\label{eq.d'(x)_in_s}
 d'(x)=1-2s\operatorname{arctanh}(s).
\end{equation}
By the uniqueness of $s_0$ in \eqref{eq:s-zero-definition} and the uniqueness of $x_0$ as a zero of $d'$ on $(0,1/2)$, we obtain $s_0=1-2x_0$.

It now follows from \eqref{eq:pad-x-weak-limit} that
\begin{align}
 \lim_{\eps\downarrow0}
 \frac{p_{\mathrm{ad},*}(1+\eps)}{\eps}
 &=-\frac{d(x_0)}{\frac12-x_0}\nonumber\\
 &=\frac{(1-s_0^2)\operatorname{arctanh}(s_0)}{s_0}
 \stackrel{\eqref{eq:s-zero-definition}}{=}\frac{1-s_0^2}{2s_0^2}.
 \label{eq:pad-weak-limit}
\end{align}
Since $(0,p_{\mathrm{ad},*}(\alpha))\subseteq\calA_\alpha$ and $p_{\det} (\alpha)=\sup \calA_\alpha $, we can apply Theorem~\ref{thm:main} to get the lower bound in \eqref{eq:p-alpha-weak-limit}.

For the upper bound, we  suppose that
\begin{equation*}
 \limsup_{\alpha\downarrow1}\frac{p_\alpha}{\alpha-1}>\frac{1-s_0^2}{2s_0^2}.
\end{equation*}
Since $p_\alpha=p_{\mathrm{det}}(\alpha)\le(\alpha-1)/\alpha$, we may choose $c\in\left(\frac{1-s_0^2}{2s_0^2},1\right)$ below this upper limit and a sequence $\eps_n\downarrow0$ such that
\begin{equation*}
 p_n:=c\eps_n<p_{1+\eps_n}.
\end{equation*}
By Theorem~\ref{thm:main}, the above display implies that
\begin{equation*}
 \PP_{p_n}^{(1+\eps_n)}(\calD)<1.
\end{equation*}

By Proposition~\ref{prop:Qin-input}, the process converges almost surely
to one of finitely many equilibria. Hence, for every $n$, there exists a
nondominating equilibrium $(x^{(n)},y^{(n)})$ such that
\begin{equation*}
 \PP_{p_n}^{(1+\eps_n)}\left(\lim_{k\to\infty}(x_k,y_k)=\left(x^{(n)},y^{(n)}\right)\right)>0.
\end{equation*}
Proposition~\ref{prop:Qin-input}(v) therefore gives
\begin{equation}\label{ineq.lambda_+(xn,yn)<=0}
 \lambda_+\bigl(x^{(n)},y^{(n)}\bigr)\le0.
\end{equation}

By \eqref{eq:pad-weak-limit} and the definition of $p_n$, one has $p_n>p_{\mathrm{ad},*}(1+\eps_n)$ for all sufficiently large $n$. Lemma~\ref{lem:ad-stability} and the instability of $(1/2,1/2)$ show that $(x^{(n)},y^{(n)})$ is neither anti-diagonal nor central. Since the vector field is invariant under $(x,y)\mapsto(y,x)$ and $(x,y)\mapsto(1-y,1-x)$,
we may assume that
\begin{equation*}
\left(x^{(n)},y^{(n)}\right)\in\Omega_+.
\end{equation*}

Passing to a subsequence, we can suppose that $x^{(n)}\to x$ and $y^{(n)}\to y$, and set $z=(x+y)/2$. Then
\begin{equation*}
 0\le x\le\frac12\le y\le1,
 \qquad
 x+y\ge1.
\end{equation*}
Dividing the equilibrium equations by $\eps_n$ and using \eqref{eq:h-weak-expansion}, we obtain
\begin{equation}\label{eq:limiting-weak-equilibrium}
 d(x)+c(z-x)=0,
 \qquad
 d(y)+c(z-y)=0.
\end{equation}
Recall that $d(0)=d(1)=d(1/2)=0$. The above equations imply that $x\neq0$ and $y\neq1$. Moreover, if $x=1/2$ or $y=1/2$, they give $x=y=1/2$. Thus, either the limit is $(1/2,1/2)$ or $0<x<1/2<y<1$.

In both cases, there exist
$\delta\in(0,1/2)$ and $N\in\mathbb N$ such that, for every $n\ge N$,
\begin{equation*}
 x^{(n)},\quad y^{(n)},\quad
 \frac{x^{(n)}+y^{(n)}}2
 \in[\delta,1-\delta].
\end{equation*}
This implies that the uniform expansion
\eqref{eq:h-prime-weak-expansion} applies at $x^{(n)}, y^{(n)}, \frac{x^{(n)}+y^{(n)}}2$. Therefore, we can use the expression for the Jacobian matrix in \eqref{eq:J-ABD} and \eqref{eq:h-prime-weak-expansion} to obtain
\begin{equation}\label{eq:limiting-weak-jacobian}
 \frac1{\eps_n}DF_{p_n}^{(1+\eps_n)}\bigl(x^{(n)},y^{(n)}\bigr)\longrightarrow
 \begin{pmatrix}
 d'(x)-\dfrac c2&\dfrac c2\\[2mm]
 \dfrac c2&d'(y)-\dfrac c2
 \end{pmatrix}.
\end{equation}
Due to \eqref{ineq.lambda_+(xn,yn)<=0}, we see that for each $n$, the Jacobian matrix $DF_{p_n}^{(1+\eps_n)}\bigl(x^{(n)},y^{(n)}\bigr)$ is negative semidefinite. Hence, the limiting matrix in \eqref{eq:limiting-weak-jacobian} is also negative semidefinite. At $(x,y)=(1/2,1/2)$, the limiting matrix has eigenvalue $1$ in the direction $(1,1)$, which contradicts its negative semidefiniteness.

Utilizing the two equations in \eqref{eq:limiting-weak-equilibrium} and the fact that $d(t)=d(1-t)$ for all $t\in(0,1)$ gives
\begin{equation}\label{eq:weak-equal-level}
 d(1-x)=d(y),
 \qquad
 c=\frac{2d(y)}{y-x}.
\end{equation}

Suppose first that $y>1-x$. Set
\begin{equation*}
 a:=\frac12\log\frac{1-x}{x},
 \qquad
 b:=\frac12\log\frac{y}{1-y}.
\end{equation*}
Then we have $\tanh(a)=1-2x$, $\tanh(b)=2y-1$, so $0<a<b$. Moreover, we 
\begin{equation*}
 d(1-x)=\frac{a}{2}\operatorname{sech}^2(a),
 \qquad
 d(y)=\frac{b}{2}\operatorname{sech}^2(b),
\end{equation*}
so we can rewrite \eqref{eq:weak-equal-level} as
\begin{equation}\label{eq:weak-logit-equal-level}
 a\operatorname{sech}^2(a)
 =b\operatorname{sech}^2(b),
 \qquad
 c=
 \frac{2a\operatorname{sech}^2(a)}
 {\tanh(a)+\tanh(b)}.
\end{equation}

Let $a_0:=\operatorname{arctanh}(s_0)$. Due to \eqref{eq:s-zero-definition} that $2a_0\tanh(a_0)=2s_0\operatorname{arctanh}(s_0)=1$
and
\begin{equation*}
 \frac{\dd}{\dd r}
 \left(r\operatorname{sech}^2(r)\right)
 =
 \operatorname{sech}^2(r)
 \left(1-2r\tanh(r)\right),
\end{equation*}
we see that $r\mapsto r\operatorname{sech}^2(r)$ strictly increases on $(0,a_0]$ and strictly decreases on $(a_0,\infty)$.
The first identity in \eqref{eq:weak-logit-equal-level} and $a<b$ imply that $a<a_0<b$.
Taking logarithms in the first identity in
\eqref{eq:weak-logit-equal-level} gives
\begin{equation*}
 0
 =
 \int_a^b
 \left(\frac1r-2\tanh(r)\right)\dd r.
\end{equation*}
For $r>0$, taking derivatives of the above integrand, we obtain
\begin{align*}
 \frac{\dd}{\dd r}
 \left(\frac1r-2\tanh(r)\right)
 &=-\frac1{r^2}-2\operatorname{sech}^2(r)<0,\\
 \frac{\dd^2}{\dd r^2}
 \left(\frac1r-2\tanh(r)\right)
 &=\frac2{r^3}
 +4\operatorname{sech}^2(r)\tanh(r)>0.
\end{align*}
Thus, the integrand is strictly decreasing and strictly convex.
We can apply the Hermite--Hadamard inequality to get
\begin{equation*}
 \frac2{a+b}-2\tanh\left(\frac{a+b}{2}\right)<\frac1{b-a}\int_a^b\left(\frac1r-2\tanh(r)\right)\dd r
 =0.
\end{equation*}
By the facts that 
$\frac1{a_0}-2\tanh(a_0)=0$
and $r\mapsto 1/r-2\tanh(r)$ is strictly decreasing, we obtain
\begin{equation}\label{ineq.a+b>2a_0}
 \frac{a+b}{2}>a_0.
\end{equation}

The first identity in \eqref{eq:weak-logit-equal-level} also gives
\begin{equation*}
 \frac{\cosh(b)}{\cosh(a)}=\sqrt{\frac ba},
\end{equation*}
which combined with the second equation in
\eqref{eq:weak-logit-equal-level} implies that
\begin{equation*}
 c
 =\frac{2a\cosh(b)}
 {\cosh(a)\sinh(a+b)}
 =\frac{2\sqrt{ab}}{\sinh(a+b)}<\frac{a+b}{\sinh(a+b)}
 \stackrel{\eqref{ineq.a+b>2a_0}}{<}\frac{2a_0}{\sinh(2a_0)}
 =\frac{1-s_0^2}{2s_0^2}.
\end{equation*}
This contradicts the assumption that $c>\frac{1-s_0^2}{2s_0^2}$.

It remains to consider the case $y=1-x$. Recall that we have let $s=1-2x$, so we also have $s=2y-1$. The limiting matrix in \eqref{eq:limiting-weak-jacobian} has an eigenvector $(1,1)$ corresponding with the eigenvalue
\begin{equation*}
 d'(x)=d'(y)\stackrel{\eqref{eq.d'(x)_in_s}}{=}1-2s\operatorname{arctanh}(s),
\end{equation*}
which by negative semidefiniteness gives $s\ge s_0$. For the expression of $c$ in \eqref{eq:weak-equal-level}, we can rewrite it in terms of $s$ as
\begin{equation*}
 c=\frac{(1-s^2)\operatorname{arctanh}(s)}{s}.
\end{equation*}
Taking derivative of the above right hand side gives
\begin{equation*}
 \frac{s-(1+s^2)\operatorname{arctanh}(s)}{s^2}<0,
\end{equation*}
so the right hand side function strictly decreases on $(0,1)$.
Consequently, we obtain $c\le c_0$, which contradicts our choice of $c$. This proves the upper bound and completes the proof.
\end{proof}

\begin{proposition}\label{prop:p-alpha-not-monotone}
The map $\alpha\mapsto p_\alpha$ is not nondecreasing. More precisely,
\begin{equation}\label{eq:p-five-halves-p-three}
 p_{5/2}>\frac{31}{100}>p_3.
\end{equation}
\end{proposition}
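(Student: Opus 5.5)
By Theorem~\ref{thm:main}, $p_\alpha=p_{\mathrm{det}}(\alpha)=\sup\calA_\alpha$, so both inequalities in \eqref{eq:p-five-halves-p-three} reduce to deterministic sign statements about $\Phi_p$ at $p=31/100$.

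\textbf{Lower bound $p_{5/2}>31/100$.} Since $\calA_{5/2}$ is downward closed and (by Lemma~\ref{lem:A-interval}) an interval with supremum $p_{5/2}$, it is enough to show that some $p>31/100$ lies in $\calA_{5/2}$. For this I will exhibit $z_1\in(1/2,3/4)$ with $\Phi_p(z_1)<0$, so that $p\in\calN_{5/2}$. Note that $(\alpha-1)/\alpha=3/5$, so $p$ is admissible. Rather than solving for $q_p(z_1)$ exactly, I will work with a two-sided bracket. The function $t\mapsto\widetilde F_p(t,z_1)$ is strictly decreasing past $t_1$ by Lemma~\ref{lem:upper-branch}. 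Hence if I find rationals $t_-<t_+$ in that region with $\widetilde F_p(t_-,z_1)>0>\widetilde F_p(t_+,z_1)$, then $q_p(z_1)\in(t_-,t_+)$ and $x_p(z_1)\in(2z_1-t_+,2z_1-t_-)$. On this interval $x\mapsto -x+(1-p)h(x)$ can be bounded above using the monotonicity of $h$ together with the bound $(1-p)h'\le 1$ or its failure there. Concretely, I will take $\sup(-x)+(1-p)\sup h$, which gives a crude but rigorous upper bound on $\Phi_p(z_1)$. I will also check that it is negative. If the interval is too wide, I will narrow the bracket.

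In choosing $z_1$, I would use Proposition~\ref{prop:no-source} as guidance: negative values appear past a downward crossing. So I will first scan numerically to locate where $\Phi_p$ is most negative for $\alpha=5/2$, then verify there with exact rational arithmetic on $h_{5/2}$, using $t^{5/2}=t^2\sqrt t$ with rational bounds on square roots.

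\textbf{Upper bound $p_3<31/100$.} Here I must show that $31/100\notin\calA_3$, and also that nothing slightly below $31/100$ is in it. It suffices to pick $p'<31/100$ with $p'\notin\calA_3$, because downward closedness then forces $p_3\le p'<31/100$. Two conditions need to be verified for $\alpha=3$, where the cubic form of $h$ makes $h$ rational.

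\begin{enumerate}[label=(\alph*)]
\item $p'\ge p_{\mathrm{ad},*}(3)$. By the monotonicity in Lemma~\ref{lem:ad-stability}, this amounts to finding $x$ with $p_{\mathrm{ad}}(x)\le p'$ and $\lambda_+(x,1-x)\ge 0$, using the explicit formulas \eqref{eq:p-ad-x} and \eqref{e.ad-lambda_+.x_p}.
\item $\Phi_{p'}(z)\ge 0$ for all $z\in(1/2,3/4)$.
\end{enumerate}

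Part (b) is the main obstacle, since it requires a uniform sign on an interval. My plan is to eliminate $q$ and to use the equilibrium encoding of Lemma~\ref{lem:encoding}, working with polynomials in $(x,y)$. For $\alpha=3$, the equilibrium equations become polynomial after clearing the denominators $x^3+(1-x)^3$, and analogously for $y$ and $z$. Equilibria in $\Omega_+$ would be real roots of a resultant in one variable. I would then show, via Sturm sequences or interval arithmetic, that no such root lies in the region. That is exactly $\Phi_{p'}\ne0$ on $(1/2,3/4)$. Together with $\Phi_{p'}(3/4)>0$ from Lemma~\ref{lem:endpoint-sign}, it gives $\Phi_{p'}>0$ throughout, hence $p'\notin\calN_3$.

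As a fallback, I would subdivide $(1/2,3/4)$ into finitely many intervals. On each one I would bound $\Phi_{p'}$ from below using the monotonicity of $q_{p'}$ (which satisfies $q_p'>0$) together with monotone bounds on $h$. Near $z=1/2$, where $\Phi_{p'}(1/2)=0$, I would instead use Lemma~\ref{lem:Phi-half} and the positivity of $\lambda_+$ for $p'>p_{\mathrm{ad},*}$, plus a second-order bound on a small interval.
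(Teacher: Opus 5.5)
Your reduction is logically sound and follows the same skeleton as the paper's proof.

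\textbf{Lower bound.} For $\alpha=5/2$, bracketing $q_p(z_1)$ by a sign change of $\widetilde F_p(\cdot,z_1)$ and then using $\Phi_p(z_1)<-(2z_1-t_+)+(1-p)h(2z_1-t_-)+ph(z_1)$ is exactly the paper's argument. The paper takes $z_0=21/40$, $q_-=2141/2500$, $q_+=1713/2000$ and obtains $\Phi_{31/100}(z_0)<-159/625000$. Two small adjustments:
\begin{enumerate}[label=(\roman*)]
\item You need not place $t_\pm$ beyond $t_1$, because $\widetilde F_p(\cdot,z)$ is positive on $[\tfrac12,q_p(z))$ and negative on $(q_p(z),1]$.
\item It is better to certify the sign at $p=31/100$ itself and then pass to some $p>31/100$ by continuity of $p\mapsto\Phi_p(z_0)$, as the paper does. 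Guessing a $p>31/100$ in advance is risky because the margin is tiny.
\end{enumerate}

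\textbf{Upper bound, outline and part (a).} For $\alpha=3$, taking $p'<31/100$ outside $\calA_3$ and invoking downward closedness is a valid substitute for the paper's compactness argument with $\partial_pR>0$. Part (a) is fully explicit: $u_{3,p}(1-u_{3,p})=p/(2+p)$ gives $\lambda_+(u_{3,p},1-u_{3,p})=-1+3p+\frac{3p^2}{4(1-p)}$, hence $p_{\mathrm{ad},*}(3)=(8-2\sqrt7)/9\approx0.301$.

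\textbf{The gap: part (b).} The whole upper bound rests on (b), and you leave it as an unexecuted resultant/Sturm or interval computation. The missing idea is the paper's elimination via $v=x+y-2xy=\tfrac12-2(x-\tfrac12)(y-\tfrac12)$, which is invariant under both $(x,y)\mapsto(y,x)$ and $(x,y)\mapsto(1-y,1-x)$. For $h_3$, every equilibrium in $\Omega_+$ satisfies $R(p,v)=0$ with $v\in(\tfrac12,1)$, where
$R(p,v)=9v^3p^2+2v(27v^2-15v+4)p+4(2v-1)(9v^2-12v+2)$.
Since $\partial_pR>0$ there, the entire certificate reduces to one cubic, $10^4R(31/100,v)$, having no zero on $[\tfrac12,1]$.

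Without such a reduction, your plan has concrete pitfalls:
\begin{enumerate}[label=(\roman*)]
\item A resultant in $x$ keeps the genuine anti-diagonal root $x=u_{3,p'}\in(0,\tfrac12)$. So ``no root in the region'' is false unless that factor is removed first.
\item A resultant in $z$ may acquire real roots from complex-conjugate pairs $(x,\bar x)$. You therefore only have the implication ``no root $\Rightarrow\Phi_{p'}\neq0$'', not the equivalence you state.
\item The window for $p'$ is narrow. Numerically, the positive root of $R(\cdot,v)$ peaks at about $0.307$ near $v\approx0.70$, so $\calN_3$ reaches about $0.307$ even though $p_{\mathrm{ad},*}(3)\approx0.301$. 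Your $p'$ must lie in roughly $(0.307,0.31)$. There $\Phi_{p'}$ stays positive on $(\tfrac12,\tfrac34)$ by only a small margin, so the subdivision fallback would need a fine mesh and tight certified brackets for $q_{p'}$.
\end{enumerate}
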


\begin{proof}
By Theorem~\ref{thm:main}, it is enough to prove
\begin{equation}\label{eq:pdet-separation}
 p_{\mathrm{det}}\left(\frac52\right)>\frac{31}{100}>
 p_{\mathrm{det}}(3).
\end{equation}

\smallskip
\noindent\textbf{The lower bound for $\alpha=\frac52$.}
In this part, $h$, $\widetilde F_p$, and $\Phi_p$ are understood with $\alpha=\frac52$. Set
\begin{equation*}
 p_0:=\frac{31}{100},
 \qquad
 z_0:=\frac{21}{40},
 \qquad
 q_-:=\frac{2141}{2500},
 \qquad
 q_+:=\frac{1713}{2000}.
\end{equation*}
For rational $t,b\in(0,1)$, squaring the positive quantities in the definition of $h=h_{5/2}$ gives
\begin{equation}\label{eq:five-halves-certificate}
\begin{aligned}
 h(t)>b
 &\quad\Longleftrightarrow\quad
 \left(\frac{1-t}{t}\right)^5
 <
 \left(\frac{1-b}{b}\right)^2,\\
 h(t)<b
 &\quad\Longleftrightarrow\quad
 \left(\frac{1-t}{t}\right)^5
 >
 \left(\frac{1-b}{b}\right)^2.
\end{aligned}
\end{equation}
This criterion, followed only by clearing positive denominators, gives
\begin{equation}\label{eq:five-halves-rational-bounds}
\begin{gathered}
 \frac{56222}{10^5}<h(z_0)<\frac{56223}{10^5},\\
 h(q_-)>\frac{98861}{10^5},
 \qquad
 h(q_+)<\frac{98865}{10^5},
 \qquad
 h\left(\frac{121}{625}\right)<\frac{2747}{10^5}.
\end{gathered}
\end{equation}
Substitution of \eqref{eq:five-halves-rational-bounds} into the definition of $\widetilde F_{p_0}$ yields
\begin{equation*}
 \widetilde F_{p_0}(q_-,z_0)>0
 \qquad\text{and}\qquad
 \widetilde F_{p_0}(q_+,z_0)<0.
\end{equation*}
Lemma~\ref{lem:upper-branch} therefore gives
\begin{equation*}
 q_-<q_{p_0}(z_0)<q_+,
 \qquad
 \frac{387}{2000}<x_{p_0}(z_0)<\frac{121}{625}.
\end{equation*}
Using once more that $h$ is increasing and applying \eqref{eq:five-halves-rational-bounds}, we obtain
\begin{equation*}
 \Phi_{p_0}(z_0)
 <-\frac{387}{2000}
 +\frac{69}{100}\frac{2747}{10^5}
 +\frac{31}{100}\frac{56223}{10^5}
 =-\frac{159}{625000}<0.
\end{equation*}
The implicit function theorem and \eqref{def:Dp} show that $p\mapsto q_p(z_0)$, and hence $p\mapsto\Phi_p(z_0)$, is continuous near $p_0$. Thus, $\Phi_p(z_0)<0$ for some $p>p_0$ sufficiently close to $p_0$. Since $p_0<3/5=(\alpha-1)/\alpha$, we may also require $p<3/5$. It follows that $p\in\calN_{5/2}$, and therefore we get
\begin{equation}\label{eq:pdet-five-halves-lower}
 p_{\mathrm{det}}\left(\frac52\right)>p_0.
\end{equation}

\smallskip
\noindent\textbf{The upper bound for $\alpha=3$.}
In this part, $h=h_3$ and $p_0=31/100$. Let $p\in(0,2/3)$ and let $(x,y)\in\Omega_+$ be an equilibrium. Set
\begin{equation*}
 v:=x+y-2xy.
\end{equation*}
Since $0<x<1/2<y<1$, we have $1/2<v<1$. Substituting
\begin{equation*}
 h_3(t)=\frac{t^3}{t^3+(1-t)^3}
\end{equation*}
into the two equations $F_p^{(3)}(x,y)=0$, a few basic operations give us
\begin{equation}\label{eq:R-three-zero}
 R(p,v)=0,
\end{equation}
where we define
\begin{equation}\label{eq:R-three}
 R(p,v)
 :=9v^3p^2+2v(27v^2-15v+4)p
 +4(2v-1)(9v^2-12v+2).
\end{equation}
At $p=p_0$, one has
\begin{equation}\label{eq:R-three-at-p0}
 10^4R(p_0,v)
 =896049v^3-1413000v^2+664800v-80000.
\end{equation}
The cubic on the right-hand side of \eqref{eq:R-three-at-p0} has negative discriminant and is positive at $v=1/2$. Since its leading coefficient is positive, its unique real zero lies below $1/2$. Consequently, we have
\begin{equation}\label{eq:R-three-positive}
 R(p_0,v)>0,
 \qquad
 \text{for every }v\in\left[\frac12,1\right].
\end{equation}
Moreover, taking partial derivative of $R$ in $p$ gives
\begin{equation*}
 \partial_pR(p,v)
 =18v^3p+2v(27v^2-15v+4)>0,
\end{equation*}
for $p\ge0$ and $v\in[1/2,1]$. By \eqref{eq:R-three-positive} and compactness, there exists some $\delta\in(0,p_0)$ such that $R(p,v)>0$ for every $p\in[p_0-\delta,p_0]$ and $v\in[1/2,1]$. The strict positivity of $\partial_pR$ then gives
\begin{equation}\label{eq:R-three-uniform-positive}
 R(p,v)>0,
 \qquad
 \text{for }p\ge p_0-\delta
 \text{ and }v\in\left[\frac12,1\right].
\end{equation}
It follows from \eqref{eq:R-three-zero} that every equilibrium in $\Omega_+$ with $p\in(0,2/3)$ satisfies $p<p_0-\delta$.

If $p\in\calN_3$, choose $z\in(1/2,3/4)$ such that $\Phi_p(z)<0$. Lemma~\ref{lem:endpoint-sign} and continuity give a zero of $\Phi_p$ in $(z,3/4)$. By Lemma~\ref{lem:encoding}, this zero gives an equilibrium in $\Omega_+$. Hence,
\begin{equation}\label{eq:N-three-strict-upper}
 \sup\bigl(\calN_3\cup\{0\}\bigr)
 \le p_0-\delta<p_0.
\end{equation}

It remains to control the anti-diagonal threshold. Using \eqref{eq:u-alpha-p} to determine $u_{3,p_0}$ and then substituting into \eqref{e.ad.lambda_pm} gives
\begin{equation*}
 \lambda_+(u_{3,p_0},1-u_{3,p_0})
 =\frac{317}{9200}>0.
\end{equation*}
Therefore, the uniqueness and sign characterization in Lemma~\ref{lem:ad-stability} imply
\begin{equation}\label{eq:pad-three-upper}
 p_{\mathrm{ad},*}(3)<p_0.
\end{equation}
By \eqref{eq:A-alpha} and \eqref{eq:pdet-alpha}, one has
\begin{equation*}
 p_{\mathrm{det}}(3)
 =
 \sup\left((0,p_{\mathrm{ad},*}(3))\cup\calN_3\right).
\end{equation*}
Hence, the strict bounds \eqref{eq:N-three-strict-upper} and \eqref{eq:pad-three-upper} give
\begin{equation}\label{eq:pdet-three-upper}
 p_{\mathrm{det}}(3)<p_0.
\end{equation}

Combining \eqref{eq:pdet-five-halves-lower} and \eqref{eq:pdet-three-upper} proves \eqref{eq:pdet-separation}, which concludes the proof.
\end{proof}

\bibliographystyle{amsplain}
\bibliography{ref}

\end{document}